\newcommand{\pair}[1]{\langle #1 \rangle}
\newcommand{\U}{\mathcal{U}}
\newcommand{\V}{\mathcal{V}}
\newcommand{\unifU}{\mathfrak{U}}
\newcommand{\unifpow}[1]{\prod_u\sp{\omega}{#1}}
\newcommand{\supp}[1]{\mathrm{supp}(#1)}
\newcommand{\cl}[2][X]{\mathrm{cl}_{#1}\!\left(#2\right)}
\newcommand{\cov}{\mathbf{C}}
\newcommand{\Star}[1]{\mathrm{St}(#1)}
\newcommand{\Wgame}[1]{\mathrm{Con}(#1)}
\newcommand{\R}{\mathbb{R}}
\newcommand{\model}{\mathcal{M}}
\newcommand{\tpgame}[1]{\mathrm{TP}(#1)}
\newtheoremstyle{theorem}
     {11pt}
     {11pt}
     {}
     {}
     {\bfseries}
     {}
     {.5em}
     {\noindent\thmnumber{#2}. \thmname{#1}\thmnote{#3}}
\theoremstyle{theorem}
\newtheorem{lemma}{Lemma}[section]
\newtheorem{propo}[lemma]{Proposition}
\newtheorem{coro}[lemma]{Corollary}
\newtheorem{ex}[lemma]{Example}
\newtheorem{thm}[lemma]{Theorem}
\newtheorem{ques}[lemma]{Question}
\title{Uniform powers of compacta and the proximal game}
\author[R. Hern\'andez-Guti\'errez]{Rodrigo Hern\'andez-Guti\'errez}
\author[P. J. Szeptycki]{Paul J. Szeptycki}
\address{Department of Mathematics and Statistics, York University, Toronto, ON M3J 1P3, Canada}
\email[R. Hern\'andez-Guti\'errez]{rodhdz@yorku.ca}
\email[P. J. Szeptycki]{szeptyck@yorku.ca}
\date{\today}
\subjclass[2010]{54B10, 54C45, 54D30, 54D60, 54E15}
\keywords{uniform spaces, uniform power, topological game, Corson compactum, first countable, normality}
\begin{document}

\begin{abstract}
The countable uniform power (or uniform box product) of a uniform space $X$ is a special topology on ${}\sp{\omega}X$ that lies between the Tychonoff topology and the box topology. We solve an open problem posed by P. Nyikos showing that if $X$ is a compact proximal space then the countable uniform power of $X$ is also proximal (although it is not compact). By recent results of J. R. Bell and G. Gruenhage this implies that the countable uniform power of a Corson compactum is collectionwise normal, countably paracompact and Fr\'echet-Urysohn. We also give some results about first countability, realcompactness in countable uniform powers of compact spaces and explore questions by P. Nyikos about semi-proximal spaces.
\end{abstract}

\maketitle

\section{Introduction}

All spaces discussed in this paper will be assumed to be uniformizable (equivalently, Tychonoff, see \cite[Theorem 8.1.20]{eng}).

Let $\pair{X,\unifU}$ be a uniform space. For each $U\in\unifU$, let
$$
\widetilde{U}=\{\pair{f,g}:f,g\in{}\sp{\omega}X\textrm{ and }\forall n<\omega\ (\pair{f(n),g(n)}\in U)\}.
$$
Let $\widetilde{\unifU}$ be the uniformity on ${}\sp{\omega}X$ with base $\{\widetilde{U}:U\in\unifU\}$. Let $\unifpow{\pair{X,\unifU}}$ denote the topological space ${}\sp{\omega}X$ with the topology generated by $\widetilde{\unifU}$. This space was introduced in \cite{bell-unifbox} and called the countable uniform box product of $X$. We will call $\unifpow{\pair{X,\unifU}}$ the countable uniform power of $\pair{X,\unifU}$. If $X$ has only one compatible uniformity, as is the case when $X$ is compact \cite[Theorem 8.3.12]{eng}, we will only write $\unifpow{X}$.

Apparently the countable uniform power was discussed by Scott Williams during the 9th Prague International Topological Symposium (2001), where he asked whether the countable uniform power of a compact space is normal. We also remark that the uniformity $\widetilde{\unifU}$ is also described in \cite[p. 440]{eng} and called the uniformity of uniform convergence (although it is in fact defined for more general products).

In \cite{bell-unifbox}, Bell showed that if $X$ is any Fort space (the one-point compactification of a discrete space), then $\unifpow{X}$ is collectionwise normal and countably paracompact. Later, in \cite{bell-proximal} she proved the corresponding results when $X$ is the $\omega$-power of a Fort space. In this last paper, Bell also defined the class of proximal spaces that encompasses both of these proofs. It turns out that all metric spaces are proximal and in fact proximality is preserved under subspaces, countable products and $\Sigma$-products (all proved in \cite{bell-proximal}). The main feature of proximal spaces is that they have some nice properties.

\begin{thm}\cite{bell-proximal}
If $X$ is a proximal space, then $X$ is collectionwise normal, countably paracompact and Fr\'echet-Urysohn.
\end{thm}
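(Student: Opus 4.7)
\medskip
\noindent\textbf{Proof proposal.}
The plan is to use the game-theoretic definition of proximality in an essential way: by hypothesis, Player I has a winning strategy $\sigma$ in the proximal game on $(X,\unifU)$, which at each round $n$ returns an entourage $U_n$ (shrinking in $n$) based on the history of play, and wins as soon as the sequence $(x_n)$ of responses by Player II converges in $X$. The three properties follow by feeding the failure of each property into Player II's moves and extracting the winning convergent sequence that $\sigma$ guarantees.

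First, I would deal with the Fr\'echet--Urysohn property because it is the cleanest warm-up. Given $A\subseteq X$ and $x\in\cl{A}$, I simulate a play of the proximal game in which I uses $\sigma$ and, at round $n$, II picks a point $x_n\in A$ that lies in the appropriate $U_n$-neighborhood specified by $\sigma$ (possible since $x\in\cl{A}$ and the choices of $\sigma$ are entourages of $x$ or of previously-played points). The winning condition forces $(x_n)$ to converge, and the nesting of the entourages forces the limit to be $x$. Hence $x$ is the limit of a sequence from $A$.

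For collectionwise normality I would argue by contradiction. Let $\{F_\alpha:\alpha\in I\}$ be a discrete family of closed sets and suppose no pairwise disjoint open expansion exists. I would then describe a strategy for Player II against $\sigma$: at round $n$ II locates, using the failure of expansion and the entourage $U_n$ produced by $\sigma$, a point $x_n$ that simultaneously lies $U_n$-close to two distinct members $F_{\alpha_n}$ and $F_{\beta_n}$ of the family (the indices being chosen to constantly alternate). Because $\{F_\alpha\}$ is discrete, no point of $X$ can be a cluster point of a sequence that keeps $U_n$-approaching different $F_\alpha$'s, so $(x_n)$ cannot converge, contradicting that $\sigma$ was winning. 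Countable paracompactness is handled by a parallel argument applied to a countable increasing open cover $\{V_n\}$ with no closed shrinking: II is made to choose $x_n \notin V_n$ that is $U_n$-close to $X\setminus V_n$, and one checks that such a sequence cannot converge (any limit would lie in some $V_m$ and thus eventually be covered), again contradicting the winning condition on $\sigma$.

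The main obstacle will be step two and three, namely turning the \emph{failure} of collectionwise normality or countable paracompactness into a legal play for Player II against the prescribed strategy $\sigma$. The delicate point is that $\sigma$ dictates entourages and possibly distinguished points at every round, so II's counter-choices must simultaneously satisfy the proximity constraints imposed by $\sigma$ \emph{and} encode the hypothesized failure (alternating between distinct $F_\alpha$'s, or staying out of $V_n$). Setting this bookkeeping up correctly---in particular choosing II's moves \emph{after} seeing $\sigma$'s $(n+1)$-st entourage but before the sequence has drifted too far---is where the real work lies; once this is arranged, the desired separation and shrinking are read off from standard derivations of open expansions from discrete refinements of the uniformity.
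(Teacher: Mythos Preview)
This theorem is not proved in the present paper: it is quoted from \cite{bell-proximal} and used as a black box. So there is no ``paper's own proof'' to compare your proposal against.

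That said, your sketch has a genuine gap that would need to be repaired before it could stand on its own. You describe Player~1's winning condition as ``the sequence $(x_n)$ converges,'' but the actual definition (see the paper's Preliminaries) is that Player~1 wins if \emph{either} $(x_n)$ converges \emph{or} $\bigcap_n U_n[x_n]=\emptyset$. This second disjunct is not a technicality: in your Fr\'echet--Urysohn argument you must arrange that $x\in U_n[x_n]$ for every $n$ (so the intersection is nonempty and convergence is forced), and you must then argue separately that the limit is $x$ rather than some other point---the entourages $U_n$ produced by $\sigma$ need not form a neighborhood base at $x$. In the collectionwise-normality and countable-paracompactness arguments the same issue is fatal as written: if Player~1 can win by driving $\bigcap_n U_n[x_n]$ to $\emptyset$, then the fact that your chosen sequence fails to converge yields no contradiction.

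There is a second, structural problem with the CWN/CP sketches. The failure of a disjoint open expansion of a discrete closed family does not by itself produce, for an \emph{arbitrary} entourage $U_n$ handed down by $\sigma$ and an \emph{arbitrary} previous point $x_n$, a legal move $x_{n+1}\in U_n[x_n]$ that is simultaneously $U_{n+1}$-close to two distinct $F_\alpha$'s. Bell's actual route in \cite{bell-proximal} is not a direct contradiction of this kind; she shows that proximal spaces are $W$-spaces (cf.\ the remark after the definition of $\Wgame{X,H}$ in this paper) and, separately, that the winning strategy yields strong covering/refinement properties from which collectionwise normality and countable paracompactness are then deduced. Your outline would need substantial new ideas---in particular, a mechanism to keep $\bigcap_n U_n[x_n]\neq\emptyset$ while still encoding the separation failure---before it could replace that argument.
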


So in fact, in \cite{bell-unifbox} and \cite{bell-proximal} it is just shown that if $X$ is either a Fort space or the $\omega$-power of a Fort space, then $\unifpow{X}$ is proximal. P. Nyikos has shown \cite[Example 2.4]{nyikos-proximal_semiproximal} that there is a uniformity $\unifU$ on $X=\omega\times(\omega+1)$ such that $\unifpow{\pair{X,\unifU}}$ is not Fr\'echet-Urysohn. Thus, in the following we will restrict to compact spaces.

Notice that since $X$ can be embedded as a closed subspace of $\unifpow{X}$, then proximality of $\unifpow{X}$ implies proximality of $X$. Problem 10.3 in \cite{bell-proximal} asks if it is possible to prove that $\unifpow{X}$ is proximal whenever $X$ is proximal. In this paper we answer this Question in the affirmative (for the class of compacta).

\begin{thm}\label{main}
If $X$ is a proximal compactum, then $\unifpow{X}$ is proximal.
\end{thm}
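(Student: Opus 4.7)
The plan is to take a winning strategy $\sigma$ for Player I in the proximal game on $X$ and lift it to a winning strategy $\widetilde{\sigma}$ on $\unifpow{X}$ by running, in parallel, one copy of $\sigma$ per coordinate, with coordinate $k$'s copy beginning at round $k$ (anything that happens on coordinate $k$ before round $k$ is discarded as pre-game junk). At round $n$, for each $k \leq n$ I apply $\sigma$ to the partial coordinate-$k$ history $(W_k^k, f_k(k), W_{k+1}^k, f_{k+1}(k), \ldots, W_{n-1}^k, f_{n-1}(k))$ to get an entourage $W_n^k \in \unifU$, and then set
\[
\widetilde{\sigma}(\ldots, f_{n-1}) := \widetilde{U_n}, \qquad U_n := \bigcap_{k \leq n} W_n^k \in \unifU.
\]
This is a legitimate basic entourage of $\widetilde{\unifU}$ since a finite intersection of entourages is an entourage.

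Given Player II's response $f_n$ with $\pair{f_{n-1}, f_n} \in \widetilde{U_n}$, every $k \leq n$ satisfies $\pair{f_{n-1}(k), f_n(k)} \in W_n^k$, so on each coordinate $k$ the tail $(f_n(k))_{n \geq k}$ is a legal play against $\sigma$. Because $\sigma$ wins in $X$, this tail converges in $X$ to some point $f(k)$, and hence $f_n \to f$ pointwise in ${}^\omega X$.

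The hard part is upgrading pointwise to uniform convergence, since the topology of $\unifpow{X}$ is exactly uniform convergence on $\omega$. For a given $V \in \unifU$ one needs a single $N$ with $\pair{f_n(k), f(k)} \in V$ for all $n \geq N$ and all $k < \omega$, including coordinates $k > N$ whose private games have barely started. This is where I expect the main obstacle, and where compactness of $X$ must enter. My plan is to refine $\sigma$ so that when coordinate $k$'s game opens at round $k$, the initial entourage $W_k^k$ already pins the entire orbit $(f_n(k))_{n \geq k}$ to a controlled neighbourhood of $f(k)$, and to diagonalize these refinements across $k$ so that the entourages $\widetilde{U_n}$ shrink uniformly. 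Since $X$ being compact makes the uniform space $({}^\omega X, \widetilde{\unifU})$ complete, this uniform shrinking should force $(f_n)$ to be Cauchy, hence uniformly convergent, and its uniform limit must then agree with the pointwise limit $f$ built coordinate-by-coordinate.
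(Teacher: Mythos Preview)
Your coordinate-by-coordinate lifting is natural, but the passage from pointwise to uniform convergence is not a detail to be filled in later; it is the entire content of the theorem, and the sketch you give for it does not work.

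A small point first: you write ``Because $\sigma$ wins in $X$, this tail converges in $X$ to some point $f(k)$,'' but $\sigma$ may win on coordinate $k$ by making $\bigcap_{n\ge k} W_n^k[f_n(k)]=\emptyset$ rather than by forcing convergence. This is easily patched (if any coordinate has empty intersection then so does $\bigcap_n\widetilde{U_n}[f_n]$, and Player I wins outright), so assume every coordinate converges.

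The genuine obstruction is this. Your entourages $U_n=\bigcap_{k\le n}W_n^k$ form a countable decreasing chain, and they are all the leverage you have. When $X$ is not metrizable its unique uniformity has no countable base, so along any play there is some $V\in\unifU$ with $U_n\not\subset V$ for every $n$. The only constraint the game hands you is $\pair{f_{n-1}(k),f_n(k)}\in U_{n-1}$ for all $k$; this never forces $\pair{f_m(k),f_n(k)}\in V$ uniformly in $k$. Your proposed remedy --- ``refine $\sigma$ so that $W_k^k$ pins the entire orbit'' --- cannot help: $W_k^k=\sigma(\varnothing)$ is one fixed entourage independent of $k$, and no diagonalisation is available because at round $n$ only finitely many coordinate games have begun, while the uniform topology asks about all coordinates at once. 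Completeness of $\unifpow{X}$ lets you pass from Cauchy to convergent, but does nothing toward establishing Cauchy.

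The paper's proof takes a different route and does not lift an abstract strategy at all. It invokes the Clontz--Gruenhage theorem that a compactum is proximal iff it is Corson, so $X\subset\Sigma({}^{\kappa}I)$. Player I then plays $\widetilde{U_X(F_n,n)}$, where $F_n\in[\kappa]^{<\omega}$ is read off from the supports of the finitely many values $f_i(j)$, $i,j<n$, already seen. After the whole game the countable set $A=\bigcup_n F_n$ contains every relevant support, so all $f_n$ lie in $\unifpow{(X\cap P)}$ with $P=\{x:\supp{x}\subset A\}\cong{}^{A}I$ compact \emph{metric}. Restricted to $P$, the chosen entourages \emph{do} form a base for the uniformity, and that is precisely what upgrades the pointwise limit to a uniform one. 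The reduction to a metrizable subspace is the engine of the argument; your abstract scheme has no substitute for it.
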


After this, in section \ref{sectfirstcount} we give a characterization of first countability of uniform powers.

In section \ref{sectomega1} we give some observations on $\unifpow{(\omega_1+1)}$, we still don't know whether this space is normal (notice that $\omega_1+1$ is not proximal because it is not Fr\'echet-Urysohn). Thus, the general question still remains open.

\begin{ques} (S. Williams)
Is $\unifpow{X}$ normal whenever $X$ is compact?
\end{ques}

In section \ref{sectprod} we consider semi-proximal spaces, a class of spaces defined by Nyikos in \cite{nyikos-proximal_semiproximal}. Being a weakening of proximality, it is natural to wonder which properties of proximal spaces are also held by semi-proximal spaces. In particular, Nyikos has asked whether semi-proximal spaces are normal (\cite[Problem 13]{nyikos-proximal_semiproximal}). We explore this question for products of subspaces of $\omega_1$.

Finally, in section \ref{topproxgame}, we consider a variation of the proximal game, defined for topological spaces, which we call the \emph{topological proximal game}. We compare the topological proximal game with the proximal game and show that they are not equivalent.

\section{Preliminaries}

One of the motivations for the definition of the uniform product is that its topology lies between the Tychonoff topology and the box topology. Many topological properties that are known to be preserved by Tychonoff products are completely lost for box products. 

Consider ${}\sp{\omega}{(\omega+1)}$. With the Tychonoff topology, this product is homeomorphic to the Cantor set so it is metrizable and compact. However, with the box topology this product is not even Fr\'echet and it is unknown if it is normal in ZFC. Thus, the uniform product presents an intermediate topology that has just begun to be studied.

In the case of a compact metric space $X$, $\unifpow{X}$ is homeomorphic to the space of (continuous) functions from $\omega$ to $X$ with the topology of uniform convergence. It is known that this topology is induced by a metric. Thus, it is natural to try to obtain results for properties that metric spaces have.

Let us give some definitions and results we will need. The unit interval $[0,1]$ will be denoted as $I$. A space $X$ is $\omega$-monolithic if each separable subset of $X$ is metrizable. 

Let $\pair{X,\unifU}$ be a uniform space. Elements of $\unifU$ are called entourages. Entourages are symmetric: if $U\in\unifU$ and $\pair{x,y}\in U$ then $\pair{y,x}\in U$. Given $U\in\unifU$ and $x\in X$, we denote $U[x]=\{y\in X:\pair{x,y}\in U\}$.

A space $X$ is \emph{proximal} if there is a compatible uniformity $\unifU$ on $X$ such that in the following two-player game (called the \emph{proximal game}) there is a winning strategy for player 1. In inning $0$, player 1 chooses an entourage $U_0$ and player 2 chooses $x_0\in X$. In inning $n+1$, player 1 chooses an entourage $U_{n+1}\subset U_n$ and player 1 chooses $x_{n+1}\in U_n[x_n]$. Then player 1 wins the game if either $\bigcap\{U_n[x_n]:n<\omega\}=\emptyset$ or the sequence $\{x_n:n<\omega\}$ converges.

Another game we will use is the following game defined by Gruenhage. Given a space $X$ and $H\subset X$, the \emph{$W$-convergence game} $\Wgame{X,H}$ is played by two players as follows. In inning $n$, player 1 chooses an open set $U_n\subset U_{n-1}$ with $H\subset U_n$ ($U_{-1}=X$) and player 2 chooses a point $p_n\in U_n$. Then player 1 wins the game if the sequence $\{p_n:n<\omega\}$ converges to $H$ (for every neighborhood $W$ of $H$ there is $N<\omega$ such that $x_n\in W$ for all $n>N$). A space $X$ is a $W$-space if for every $x\in X$, player 2 has a winning strategy in $\Wgame{X,\{x\}}$. According to \cite[Lemma 5.1]{bell-proximal}, proximal spaces are in fact $W$-spaces.

Let $\kappa$ be an uncountable cardinal. If $x\in{}\sp\kappa{I}$, define $\supp{x}=\{\alpha<\kappa:x(\alpha)\neq 0\}$, this set is called the support of $x$. Then the $\Sigma$-product of $\kappa$ many copies of the unit interval is the set $\Sigma{{}\sp\kappa{I}}=\{x\in{}\sp\kappa{I}:|\supp{x}|\leq\omega\}$. Recall that a Corson compactum is a compactum contained in a $\Sigma$-product of copies of the unit interval.

\begin{thm}\cite{clontz-gruen-proximalcorson}\label{proximalCorson}
Assume that $X$ is a compactum. Then $X$ is proximal if and only if $X$ is a Corson compactum.
\end{thm}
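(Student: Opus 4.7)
The plan is to prove the two directions separately. The Corson-implies-proximal direction follows quickly from the preservation properties already recorded in the introduction; the reverse direction is the substantive content and requires bridging the game-theoretic hypothesis and an embedding into a $\Sigma$-product.

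For the easy direction, suppose $X$ is a Corson compactum, so that $X$ embeds as a closed subspace of $\Sigma{{}\sp\kappa{I}}$ for some uncountable cardinal $\kappa$. The interval $I$ is metric, hence proximal, and the excerpt records that proximality is preserved under $\Sigma$-products and under subspaces. Combining these facts yields that $X$ is proximal.

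For the converse, assume $X$ is a proximal compactum and fix a compatible uniformity $\unifU$ together with a winning strategy $\sigma$ for player 1 in the proximal game. The approach is to leverage a known characterization of Corson compacta that is phrased compatibly with the game structure---for instance Gul'ko's theorem that a compactum is Corson if and only if it admits a $T_0$-separating $\sigma$-point-finite family of cozero sets, or one of Gruenhage's game-theoretic characterizations. Along each descending sequence of entourages $U_0\supset U_1\supset\ldots$ produced by $\sigma$, compactness of $X$ yields a finite cover by sets of the form $U_n[x]$. The goal is to assemble these finite covers, as the game tree is explored, into a single family of cozero sets that is $T_0$-separating and $\sigma$-point-finite, and hence certifies that $X$ is Corson.

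I anticipate that the main difficulty lies in establishing point-finiteness. A naive use of compactness only yields a $\sigma$-locally finite family of cozero sets, which would characterize the Eberlein (rather than Corson) compacta and so prove too much. The missing ingredient must be extracted from the convergence clause of the winning condition: the strategy forbids player 2 from indefinitely keeping the chosen points clustered inside shrinking neighborhoods of a common accumulation point. Translating this dynamical convergence property into the static combinatorial statement of point-finiteness is the crux of the argument. An alternative route is to construct an embedding $X\hookrightarrow \Sigma{{}\sp\kappa{I}}$ directly, using coordinates indexed by pairs consisting of an entourage and an element of the corresponding cover, and appealing to $\sigma$ to certify that each image point has countable support.
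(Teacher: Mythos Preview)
This theorem is not proved in the present paper: it is quoted from \cite{clontz-gruen-proximalcorson} and used as a black box in the proof of Theorem~\ref{main}. So there is no ``paper's own proof'' to compare against here.

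That said, your easy direction is fine and matches the literature: Corson compacta sit inside $\Sigma$-products of copies of $I$, and proximality passes down through $\Sigma$-products and subspaces.

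Your hard direction, however, is not a proof but a plan with an explicitly unresolved gap. You correctly diagnose that a naive compactness argument yields only $\sigma$-locally finite cozero families (Eberlein, not Corson), and you say the convergence clause of the winning condition ``must'' supply the missing point-finiteness --- but you do not show how. Neither of your proposed routes (Gul'ko's $T_0$-separating $\sigma$-point-finite criterion, or a direct embedding with coordinates indexed by entourage/cover-element pairs) is carried to completion, and the second route in particular gives no mechanism for verifying countable supports. As written, this is an outline with the crux left open.

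For orientation: the argument in \cite{clontz-gruen-proximalcorson} does not go through Gul'ko's criterion. It instead uses Gruenhage's game-theoretic characterization of Corson compacta in terms of the $W$-convergence game $\Wgame{X^2,\Delta}$ played at the diagonal of $X^2$. A winning strategy for player~1 in the proximal game on $X$ is converted into a winning strategy for the open player in $\Wgame{X^2,\Delta}$: given a play $\pair{x_n,y_n}\in X^2$, one feeds both coordinates into parallel runs of the proximal strategy, and the entourages returned by $\sigma$ serve as the open neighborhoods of $\Delta$. The convergence/empty-intersection dichotomy for each coordinate sequence then forces $\pair{x_n,y_n}\to\Delta$. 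This bypasses entirely the point-finiteness obstacle you identified.
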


In order to simplify some proofs, since we are restricting to compacta, we can define a basis for the topology of the uniform power using finite open covers. If $\U$ is an open cover of a space $X$ and $A\subset X$, then the star of $\U$ around $A$ is the set $\Star{A,\U}=\bigcup\{U\in\U:A\cap U\neq\emptyset\}$. A cover $\V$ star-refines a cover $\U$ if $\{\Star{V,\V}:V\in\V\}$ refines $\U$. We will write $\Star{x,\V}$ for $\Star{\{x\},\U}$ when $x\in X$. The following is \cite[Proposition 8.1.6]{eng}.

\begin{lemma}\label{covers}
Let $X$ be any space. Assume that $\cov$ is a set of open covers of $X$ such that
\begin{itemize}
\item[(a)] for every $\U_0,\U_1\in\cov$ there is $\V\in\cov$ that refines both $\U_0$ and $\U_1$;
\item[(b)] for every $\U\in\cov$ there is $\V\in\cov$ that star-refines $\U$; and
\item[(c)] for every $x,y\in X$ there is $\U\in\cov$ such that no member of $\U$ contains both $x$ and $y$.
\end{itemize}
Then the collection of all sets of the form $\bigcup\{U\times U:U\in\U\}$, where $\U\in\cov$ is a base for a uniformity in $X$. 
\end{lemma}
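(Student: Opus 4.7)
The plan is to verify the four standard base-of-uniformity axioms for the family $\mathcal{B} = \{D_\U : \U \in \cov\}$, where $D_\U = \bigcup\{U \times U : U \in \U\}$: namely, that each $D_\U$ contains the diagonal $\Delta_X$, that each is symmetric, that pairwise intersections in $\mathcal{B}$ contain an element of $\mathcal{B}$, and that for each $D_\U \in \mathcal{B}$ there is $D_\V \in \mathcal{B}$ with $D_\V \circ D_\V \subset D_\U$. I would also verify that $\bigcap\mathcal{B} = \Delta_X$, so that the resulting uniformity is separating.

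The first three axioms and the separation property are essentially bookkeeping. Each $D_\U$ is a union of symmetric sets $U \times U$, so is symmetric; because $\U$ covers $X$, every point $x$ lies in some $U \in \U$, so $(x,x) \in D_\U$. For the intersection axiom, given $\U_0, \U_1 \in \cov$, hypothesis (a) produces $\V \in \cov$ refining both; then each $V \in \V$ sits inside some $U_i \in \U_i$, so $V \times V \subset D_{\U_0} \cap D_{\U_1}$, and consequently $D_\V \subset D_{\U_0} \cap D_{\U_1}$. The separating property follows at once from (c): if $x \neq y$, pick $\U \in \cov$ in which no member contains both $x$ and $y$, and then $(x,y) \notin D_\U$.

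The substantive step is the composition axiom. Given $\U \in \cov$, apply (b) to obtain $\V \in \cov$ that star-refines $\U$; I would then show $D_\V \circ D_\V \subset D_\U$. If $(x,y) \in D_\V \circ D_\V$, there is a witness $z \in X$ with $(x,z), (z,y) \in D_\V$, hence there are $V_1, V_2 \in \V$ with $\{x,z\} \subset V_1$ and $\{z,y\} \subset V_2$. Since $z \in V_1 \cap V_2$ we have $V_2 \subset \Star{V_1,\V}$, and star-refinement yields some $U \in \U$ with $\Star{V_1,\V} \subset U$. Both $V_1$ and $V_2$ therefore lie in $U$, so $x, y \in U$ and $(x, y) \in D_\U$.

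There is no real obstacle here; the result is a classical axiom verification in the style of Engelking. The only thing requiring care is the distinction between refinement and star-refinement in the composition step: the conclusion $\Star{V_1,\V} \subset U$ rather than merely $V_1 \subset U$ is what simultaneously accommodates both $V_1$ and $V_2$ in a single member of $\U$, which is precisely what $D_\V \circ D_\V \subset D_\U$ demands.
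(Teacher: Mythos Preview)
Your proof is correct; it is the standard verification of the base-of-uniformity axioms and there are no gaps. Note, however, that the paper does not actually prove this lemma: it simply attributes the result to Engelking (\cite[Proposition 8.1.6]{eng}) and moves on, so there is no in-paper argument to compare against. Your write-up is essentially what one finds in Engelking, so in that sense it matches the intended proof.
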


In the case that $X$ is compact, we might restrict $\cov$ to consist of all finite open covers of $X$ and we will say that $\cov$ is basic (for $X$). The uniformity thus defined is the unique uniformity on $X$. Given $\U$ a finite cover of $X$ and $f\in\unifpow{X}$, let
$$
\U[f]=\prod\{\Star{f(i),\U}:i<\omega\},
$$
which is an open neighborhood of $f$ and $\{\U[f]:\U\in\cov\}$ is a local base at $f$.

\section{Proof of Theorem \ref{main}}

Let $X$ be a proximal compactum. By Theorem \ref{proximalCorson}, there is some $\kappa$ such that $X\subset\Sigma{{}\sp\kappa{I}}$. Let us define $\unifU_\kappa$ a special base for the (unique) uniformity on ${}\sp\kappa{I}$. Since $X$ has a unique uniformity, the restriction of $\unifU_\kappa$ to $X$ will define the topology of $\unifpow{X}$. 

Given $F\in[\kappa]\sp{<\omega}$ and $n<\omega$, let 
$$
U_\kappa(F,n)=\{\pair{f,g}:f,g\in{}\sp{\kappa}I,\forall \alpha\in F\ (|f(\alpha)-g(\alpha)|<1/2\sp{n})\}.
$$
Notice that $\unifU_\kappa=\{U_\kappa(F,n):F\in[\kappa]\sp{<\omega},n<\omega\}$ is a family of symmetric open neighborhoods of the diagonal in ${}\sp{\kappa}I\times{}\sp{\kappa}I$ with the finite intersection property. Moreover, $\bigcap\unifU_\kappa$ equals the diagonal. By compactness, it easily follows that $\unifU_\kappa$ is a base of the unique uniformity of ${}\sp{\kappa}I$. 

Given $Y\subset{}\sp\kappa{I}$, we will denote $U_Y(F,n)=U_\kappa(F,n)\cap(Y\times Y)$ and $\unifU_Y=\{U_Y(F,n):F\in[\kappa]\sp{<\omega},n<\omega\}$.

Now, let's play the proximal game on $\unifpow{X}$, we have to give a winning strategy for player 1. Assume that in inning $n<\omega$, player 2 chooses $f_n\in\unifpow{X}$. For each $n<\omega$, let $F_n$ be the set of the first $n$ elements of $\supp{f_i(j)}$ for all $i,j<n$, this is a finite set and is known in inning $n$ before player 1 makes a choice. Define $U_n=U_X(F_n,n)$ for all $n<\omega$. So in inning $n$, make player 1 choose $\widetilde{U_n}$. We claim that this is a winning strategy.

Let $A=\bigcup\{F_n:n<\omega\}$, this is a countable set. Let $P=\{x\in{}\sp\kappa{I}:\supp{x}\subset A\}$, this set is in fact homeomorphic to ${}\sp{A}I$ and contained in $\Sigma{{}\sp\kappa{I}}$. Notice that both $\unifpow{X}$ and $\unifpow{P}$ have the subspace topology of $\unifpow{{}\sp\kappa{I}}$ and also $\unifpow{(X\cap P)}$ is a closed subspace of both $\unifpow{X}$ and $\unifpow{P}$. Moreover, $f_n\in\unifpow{(X\cap P)}$ for each $n<\omega$. So it is enough to prove that the sequence $\{f_n:n<\omega\}$ converges in $\unifpow{P}$. 

Define $V_n=U_P(F_n,n)$ for all $n<\omega$. By the same argument used to prove that $\unifU_\kappa$ is a base for the uniformity of ${}\sp\kappa{I}$, it is possible to prove that in fact the sets $\{V_n:n<\omega\}$ form a base for the uniformity of $P$.

Let us first prove that $\{f_n:n<\omega\}$ converges pointwise, so fix $i<\omega$. Recall that according to the definition of the proximal game, $\pair{f_k(i),f_{k+1}(i)}\in U_k$ for all $k<\omega$. Given $m<n<\omega$ and $\alpha\in F_m$, it easily follows that 
$$
|f_{m+1}(i)(\alpha)-f_n(i)(\alpha)|\leq\sum_{k=m+1}\sp{n-1}{|f_k(i)(\alpha)-f_{k+1}(i)(\alpha)|}<\sum_{k=m+1}\sp{n-1}{1/2\sp{k}}<1/2\sp{m}.
$$
Since the topology of $P$ is that of pointwise convergence and $\{f_n(i)(\alpha):n<\omega\}$ is a Cauchy sequence for all $\alpha\in A$, there exists $f(i)\in P$ to which $\{f_n(i):n<\omega\}$ converges. Notice that $|f_{n+2}(i)(\alpha)-f(i)(\alpha)|\leq 1/2\sp{n+1}<1/2\sp{n}$ every time $n<\omega$ and $\alpha\in F_{n}$. This implies that if $m+2\leq n<\omega$, then $\pair{f(i),f_{n}(i)}\in V_m$.

Thus, we have defined a function $f\in\unifpow{P}$ such that $f_n\in\widetilde{V_m}[f]$ every time that $m+2\leq n<\omega$. Since $\{\widetilde{V_n}[f]:n<\omega\}$ is a local base in $\unifpow{P}$ at $f$, $f$ is the limit of the sequence $\{f_n:n<\omega\}$ in $\unifpow{P}$. By the discussion above, $f$ is the limit of the sequence $\{f_n:n<\omega\}$ in $\unifpow{X}$. Thus, player 1 wins the game, showing that the strategy given is a winning strategy. We have thus finished the proof.

\section{First countability}\label{sectfirstcount}

In this section we give a characterization of first countability for uniform powers. Recall that first countability of a Tychonoff product has a very simple characterization: the product must be countable and each factor space must be first countable. For the box product, not even $\square{}\sp{\omega}(\omega+1)$ is Fr\'echet-Urysohn. However, the countable uniform power of a metric space is metric, so our characterization must include metric spaces.

\begin{lemma}\label{1stcount1}
If $X$ is a separable compactum and $\unifpow{X}$ is first countable, then $X$ is metrizable.
\end{lemma}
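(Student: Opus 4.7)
The plan is to use separability of $X$ to convert first countability of $\unifpow{X}$ at a single well-chosen point into the existence of a countable base for the unique uniformity of $X$; since a uniform space with a countable base is pseudometrizable, this will force the compactum $X$ to be metrizable. Fix a dense enumeration $\{d_k:k<\omega\}$ of $X$ and let $f\in\unifpow{X}$ be the function $f(k)=d_k$. By hypothesis there is a countable local base at $f$, and by the description of the topology of $\unifpow{X}$ via finite open covers recalled at the end of the preliminaries, it may be taken of the form $\{\widehat{\U_n}[f]:n<\omega\}$ with each $\U_n$ a finite open cover of $X$. It will suffice to show that every finite open cover $\U$ of $X$ is refined by some $\U_n$, since by Lemma~\ref{covers} this forces the entourages associated with $\{\U_n:n<\omega\}$ to form a base for the unique uniformity of $X$.

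Given such a $\U$, apply Lemma~\ref{covers}(b) to fix a finite open star-refinement $\mathcal{W}$ of $\U$, and pick $n$ with $\widehat{\U_n}[f]\subset\widehat{\mathcal{W}}[f]$. The key substitution step is the following: given any nonempty $V\in\U_n$, density supplies some $k$ with $d_k\in V$, and for each $a\in V$ one defines $g\in\unifpow{X}$ by $g(k)=a$ and $g(j)=d_j$ for $j\neq k$. Since $g$ agrees with $f$ off coordinate $k$ and both $d_k,a\in V\in\U_n$ at coordinate $k$, we have $g\in\widehat{\U_n}[f]$, so $g\in\widehat{\mathcal{W}}[f]$. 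Inspecting coordinate $k$ produces some $W\in\mathcal{W}$ with $d_k,a\in W$, giving $a\in\Star{d_k,\mathcal{W}}$. As $a$ varies over $V$ this yields $V\subset\Star{d_k,\mathcal{W}}$, and the star-refinement property then supplies $U'\in\U$ with $\Star{d_k,\mathcal{W}}\subset U'$, whence $V\subset U'$.

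The main obstacle is precisely this substitution step, and it is where separability is used essentially: to ``test'' an arbitrary point $a\in X$ through the single function $f$, one needs $f$ to already place some $d_k$ inside the chosen piece $V\in\U_n$, which is exactly what the dense enumeration provides. Once the claim above is established the conclusion is standard: a uniform space with a countable base is pseudometrizable, and a compact Hausdorff pseudometrizable space is metrizable.
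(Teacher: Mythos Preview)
Your argument is correct, but it proceeds differently from the paper's. The paper argues by contradiction: assuming $X$ is not metrizable, it arranges that the covers $\{\U_n\}$ satisfy conditions (a) and (b) of Lemma~\ref{covers}, so condition (c) must fail, yielding two distinct points $x_0,x_1$ that are never separated by any $\U_n$. From this the paper manufactures a concrete neighborhood $W$ of $f$ (built from a two-element cover $\{V_0,V_1\}$ separating $x_0$ from $x_1$) that no $\widehat{\U_n}[f]$ fits inside, contradicting the local base hypothesis. Your route is direct: you show outright that the family $\{\U_n\}$ is cofinal under refinement among all finite open covers, by a one-coordinate substitution trick that pushes an arbitrary point $a\in V$ through the local-base containment $\widehat{\U_n}[f]\subset\widehat{\mathcal W}[f]$. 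This immediately gives a countable base for the unique uniformity, hence metrizability. Your approach is arguably cleaner and more constructive; the paper's has the virtue of isolating the precise obstruction (failure of the separation clause (c)) and exhibiting an explicit witnessing neighborhood. One cosmetic point: your appeal to ``Lemma~\ref{covers}(b)'' for the existence of a finite open star-refinement is really just the standard fact that compact Hausdorff spaces admit such refinements, not the lemma itself.
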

\begin{proof}
Let $f\in\unifpow{X}$ be an enumeration of a countable dense subset of $X$ and assume that there is a countable collection $\{\U_n:n<\omega\}$ of open covers of $X$ such that $\{\U_n[f]:n<\omega\}$ forms a local base at $f$.

Assume that $X$ is not metrizable, this means that the covers $\{\U_n:n<\omega\}$ do not generate the uniformity of $X$. Since $X$ is compact, we may assume that conditions (a) and (b) in Lemma \ref{covers} hold.  Thus, there are $x_0,x_1\in X$ such that $x_0\neq x_1$ and no $\U_n$ separates between $x_0$ and $x_1$. Let $V_0,V_1$ be open sets such that $x_0\in V_0\setminus\cl{V_1}$, $x_1\in V_1\setminus\cl{V_0}$ and $X=V_0\cup V_1$.

Define $W=\prod\{W_n:n<\omega\}$, where $W_n=V_0$ if $f(n)\notin V_1$ and $W_n=V_1$ otherwise, for each $n<\omega$. Then $W$ is an open neighborhood of $f$ in $\unifpow{X}$. Let $n<\omega$, we will prove that $\prod\{\Star{f(i),\U_n}:i<\omega\}\not\subset W$.

Define $g\in\prod\{\Star{f(i),\U_n}:i<\omega\}$ in the following way. If $\Star{f(i),\U_n}\cap\{x,y\}\neq\emptyset$, let $g(i)=f(i)$. Otherwise, $\{x_0,x_1\}\subset\Star{f(i),\U_n}$ so let $g(i)=x_1$ if $f(i)\notin V_1$ and $g(i)=x_0$ if $f(i)\in V_1$. The fact that $f$ has a dense image shows that the second case holds for some $i<\omega$ and thus $g\notin W$. This contradiction shows that $X$ must be metrizable.
\end{proof}

\begin{lemma}\label{1stcount2}
If $X$ is a compactum and $\unifpow{X}$ is first countable, then every separable and closed subset of $X$ is of type $G_\delta$.
\end{lemma}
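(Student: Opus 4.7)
The plan is to produce, for a given closed separable $F\subseteq X$, a countable family of open subsets of $X$ whose intersection is exactly $F$. First, enumerate a countable dense subset of $F$ as $\{d_i:i<\omega\}$ and set $f\in\unifpow{X}$ to be the point $f(i)=d_i$. By first countability of $\unifpow{X}$ together with the fact that sets of the form $\prod_{i<\omega}\Star{f(i),\U}$, with $\U$ a finite open cover of $X$, form a neighbourhood base at $f$, I may fix a sequence $\{\U_n:n<\omega\}$ of finite open covers of $X$ such that $\{\prod_{i<\omega}\Star{f(i),\U_n}:n<\omega\}$ is a local base at $f$.

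I then set $O_n=\bigcup_{i<\omega}\Star{f(i),\U_n}$, which is open in $X$, and reduce the lemma to showing $F=\bigcap_{n<\omega}O_n$. The inclusion $F\subseteq O_n$ relies only on density: for $x\in F$ and any $U\in\U_n$ containing $x$, density inside $F$ produces some $f(i)\in U$, so $x\in\Star{f(i),\U_n}\subseteq O_n$.

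The reverse inclusion $\bigcap_n O_n\subseteq F$ is the heart of the argument. Fix $y\in X\setminus F$ and consider the coordinate-wise product $W_y=\prod_{i<\omega}(X\setminus\{y\})\subseteq\unifpow{X}$; the key step is to realise $W_y$ as a basic neighbourhood of $f$ in $\unifpow{X}$. Since $F$ is closed and $y\notin F$, the pair $\U=\{X\setminus F,\,X\setminus\{y\}\}$ is a finite open cover of $X$. Each $f(i)\in F$ misses $X\setminus F$, so the only member of $\U$ containing $f(i)$ is $X\setminus\{y\}$, giving $\Star{f(i),\U}=X\setminus\{y\}$ for every $i$. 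Hence $W_y=\prod_{i<\omega}\Star{f(i),\U}$ is indeed a basic open neighbourhood of $f$.

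By the local-base property, some $\prod_{i<\omega}\Star{f(i),\U_n}$ is contained in $W_y$. A one-coordinate swap (if $y\in\Star{f(i_0),\U_n}$ for some $i_0$, replace $f(i_0)$ by $y$ to obtain a member of $\prod_i\Star{f(i),\U_n}$ outside $W_y$) forces $y\notin\Star{f(i),\U_n}$ for every $i<\omega$, i.e.\ $y\notin O_n$, completing the argument. The one nontrivial conceptual point is recognising the coordinate-wise product $W_y$ as a basic neighbourhood of $f$ in the uniform topology; once the two-element cover $\U$ is in hand, the rest is direct bookkeeping from density and the chosen local base.
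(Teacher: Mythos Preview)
Your proof is correct and follows the same overall outline as the paper's: both enumerate a countable dense subset of the closed separable set as a point $f\in\unifpow{X}$, extract finite covers $\U_n$ so that $\{\widehat{\U_n}[f]:n<\omega\}$ is a local base, set $O_n=\bigcup_{i}\Star{f(i),\U_n}$, and use a single-coordinate swap to show that a point $y\notin F$ must miss some $O_n$. The difference is in packaging. The paper argues by contradiction: it assumes $F$ is not $G_\delta$, picks $y\in\bigcap_n O_n\setminus F$, chooses an open $V$ with $F\subset V$ and $y\notin V$, and shows that ${}^\omega V$ is a neighborhood of $f$ not containing any $\widehat{\U_n}[f]$. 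You instead argue directly and, more explicitly, produce the two-element cover $\U=\{X\setminus F,\,X\setminus\{y\}\}$, which gives $\widehat{\U}[f]=\prod_i(X\setminus\{y\})$ on the nose. This buys you a small amount of clarity: the paper leaves implicit the (easy, compactness-based) verification that ${}^\omega V$ really is a neighborhood of $f$ in the uniform topology, whereas in your version the relevant set is visibly a basic neighborhood.
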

\begin{proof}
Let $A$ be a separable closed subset of $X$ that is not of type $G_\delta$ and let $f\in\unifpow{X}$ be an enumeration of a countable dense subset of $A$. Assume that there is a countable collection $\{\U_n:n<\omega\}$ of open covers of $X$ such that $\{\U[f]:n<\omega\}$ forms a local base at $f$.

Let $U_n=\bigcup\{\Star{f(i),\U_n}:i<\omega\}$, this is an open set and it is easy to see that it contains $A$, for each $n<\omega$. Since $A$ is not of type $G_\delta$, there exists $y\in(\bigcap\{U_n:n<\omega\})\setminus A$. Let $V$ be an open subset of $X$ such that $A\subset V$ and $y\notin V$. Notice that ${}\sp{\omega}V$ is an open neighborhood of $f$.

For $n<\omega$, define $g_n\in\unifpow{X}$ in the following way: if $y\in\Star{f(i),\U_n}$, let $g_n(i)=y$, otherwise, let $g_n(i)=f(i)$. Clearly, $g_n\in\U_n[f]\setminus {}\sp{\omega}V$. This is a contradiction to the fact that $\{\U_n[f]:n<\omega\}$ forms a local base at $f$. Then $X$ cannot be of type $G_\delta$.
\end{proof}

\begin{thm}\label{firstcount}
Let $X$ be a compactum. Then $\unifpow{X}$ is first countable if and only if $X$ is $\omega$-monolithic and every separable closed subspace of $X$ is a $G_\delta$.
\end{thm}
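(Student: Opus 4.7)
The plan is to prove the two implications separately. The necessity is essentially a repackaging of Lemmas \ref{1stcount1} and \ref{1stcount2}. Given a separable subset $S$ of $X$, its closure $\cl{S}$ is a closed separable subspace, and $\unifpow{\cl{S}}$ sits as a closed subspace of $\unifpow{X}$ because the restriction of the unique uniformity of $X$ to $\cl{S}$ is the unique uniformity of $\cl{S}$, and any function with some coordinate outside $\cl{S}$ is separated from $\unifpow{\cl{S}}$ by a basic open neighborhood coming from a finite cover that isolates that coordinate. Hence $\unifpow{\cl{S}}$ is first countable, Lemma \ref{1stcount1} forces $\cl{S}$ (and so $S$) to be metrizable, and the $G_\delta$ condition is exactly Lemma \ref{1stcount2}.

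For sufficiency, fix $f \in \unifpow{X}$ and set $K = \cl{f[\omega]}$. Then $K$ is a closed separable subspace of $X$, so by the two hypotheses $K$ is compact metrizable and a $G_\delta$ in $X$. The crucial intermediate observation I would prove is that the diagonal $\Delta_K = \{(k,k) : k \in K\}$ is a $G_\delta$ subset of the compact Hausdorff space $K \times X$. Indeed, metrizability of $K$ gives $\Delta_K$ as a $G_\delta$ in $K \times K$, while writing $K = \bigcap_n W_n$ with $W_n$ open in $X$ gives $K \times K = \bigcap_n (K \times W_n)$ as a $G_\delta$ in $K \times X$; composing yields the claim. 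Being a compact $G_\delta$ in a compact Hausdorff space, $\Delta_K$ therefore admits a countable decreasing base $\{U_n : n<\omega\}$ of open neighborhoods in $K \times X$.

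The last step is to lift this base to entourages of $X$. For each $n$, choose an open $\widetilde{U}_n \subseteq X \times X$ with $\widetilde{U}_n \cap (K \times X) = U_n$, and set $V_n = \widetilde{U}_n \cup \bigl((X \setminus K) \times (X \setminus K)\bigr)$. Then $V_n$ is open in $X \times X$ and contains the full diagonal $\Delta_X$ (points $(x,x)$ with $x \in K$ sit in $\widetilde{U}_n$ because $\Delta_K \subseteq U_n$, and points with $x \notin K$ sit in $(X \setminus K)^2$), while still satisfying $V_n \cap (K \times X) = U_n$. After symmetrization, each $V_n$ is an entourage of $X$, and I claim $\{\widetilde{V_n}[f] : n<\omega\}$ is a local base at $f$ in $\unifpow{X}$: given any entourage $U$ of $X$, $U \cap (K \times X)$ is an open neighborhood of $\Delta_K$ in $K \times X$ and so contains some $U_n$; since $f$ takes values in $K$, this yields $V_n[f(i)] = U_n[f(i)] \subseteq U[f(i)]$ for every $i$, hence $\widetilde{V_n}[f] \subseteq \widetilde{U}[f]$.

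The main technical hurdle will be the $G_\delta$ argument for $\Delta_K$ in $K \times X$ and the subsequent lift: packaging metrizability of $K$ together with the $G_\delta$-ness of $K$ in $X$ into a single countable neighborhood base of $\Delta_K$ in $K \times X$, and then extending those neighborhoods to honest entourages of the ambient $X$. Once this is in hand the conclusion is just bookkeeping, because the local structure of $\unifpow{X}$ at $f$ only sees what happens on the slice $\{f(i)\} \times X \subseteq K \times X$ for each $i$.
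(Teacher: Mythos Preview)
Your argument is correct, and the sufficiency direction is a genuinely different (and cleaner) route than the paper's. For necessity you spell out explicitly why Lemma~\ref{1stcount1} applies to an arbitrary separable closed $\cl{S}\subset X$ by observing that $\unifpow{\cl{S}}$ sits as a closed subspace of $\unifpow{X}$; the paper's proof just cites the two lemmas, so your version is actually more complete on this point.

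For sufficiency, the paper builds an explicit countable sequence of finite open covers $\{\U_n\}$ of $X$ adapted to $A=\cl{f[\omega]}$: covers that star-refine one another, separate points of $A$, and refine a $G_\delta$ presentation of $A$. It then proves a technical claim $(\star)$ that $\cl{V}\cap A=\bigcap_n \Star{\cl{V}\cap A,\U_n}$ and uses it to show $\{\widehat{\U_n}[f]\}$ is a local base. Your approach replaces all of this by the single observation that $\Delta_K$ is a compact $G_\delta$ in the compact space $K\times X$ (metrizability of $K$ plus $G_\delta$-ness of $K$ in $X$), so it has a countable neighborhood base there, and then you lift each neighborhood to an entourage of $X$ by padding with $(X\setminus K)^2$; since $f$ takes values in $K$, only the $K\times X$ slice matters when computing $V_n[f(i)]$. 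This packages the two hypotheses more transparently and avoids the cover-by-cover bookkeeping and claim $(\star)$; the price is that one must invoke the (standard) fact that every neighborhood of the diagonal in a compactum is an entourage of its unique uniformity. One minor remark: your use of $\widetilde{U}_n$ for the open lift to $X\times X$ clashes with the paper's notation, where $\widetilde{U}$ denotes the induced entourage on $\unifpow{X}$; you may want to rename it.
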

\begin{proof}
First, if $X$ is first countable, then by Lemmas \ref{1stcount1} and \ref{1stcount2}, we are done. So assume that $X$ is $\omega$-monolithic and every closed separable subset of $X$ is of type $G_\delta$.

Let $f\in\unifpow{X}$, we have to describe a countable base at $f$. Call $A=\cl{f[\omega]}$. Since $A$ is of type $G_\delta$, we may choose a sequence $\{U_n:n<\omega\}$ of open subsets of $X$ with intersection $A$ such that $\cl{U_{n+1}}\subset U_n$ for all $n<\omega$. Since $A$ is second countable, by Lemma \ref{covers}, there is a sequence of open covers $\{\U_n:n<\omega\}$ of $X$ such that 
\begin{itemize}
\item[(a)] $\U_{n+1}$ star-refines $\U_n$ for each $n<\omega$
\item[(b)] for every $x,y\in A$ with $x\neq y$, there is $n<\omega$ such that no member of $\U_n$ contains both $x$ and $y$.
\end{itemize}
We may additionally assume that
\begin{itemize}
\item[(c)] $\U_n$ refines the open cover $\{U_n,X\setminus\cl{U_{n+1}}\}$ for all $n<\omega$.
\end{itemize}
We claim that $\{\U_n[f]:n<\omega\}$ is a local base at $f$. So let $\U$ be any open cover of $X$, we must find $m<\omega$ such that $\U_m[f]\subset \U[f]$. To prove this, we first show the claim $(\ast)$ below.

Let $V$ be an open set such that $V\cap A\neq\emptyset$. Define $W(V,n)=\Star{\cl{V}\cap A,\U_n}$. Then $W(V,n)$ is an open set and
$$
(\star)\ \cl{V}\cap A=\bigcap\{W(V,n):n<\omega\}.
$$

Let us prove equation $(\star)$. Clearly the left side is contained in the right side. Now let $x\in X\setminus (\cl{V}\cap A)$. If $x\notin A$, then by (c) there is $n<\omega$ such that $x\notin U_n$ which implies that $x\notin W(V,n)$. If $x\in A$, by property (b) and compactness there is $n<\omega$ such that $\Star{x,\U_n}\cap\cl{V}\cap A=\emptyset$ so $x\notin W(V,n)$.

Now, consider an open cover $\{U\sp\prime:U\in\U\}$ such that $\cl{U\sp\prime}\cap A\subset U$ for each $U\in\U$. Since $\U$ is finite, by $(\star)$ it is possible to find $m<\omega$ such that for every $U\in\U$ with $U\sp\prime\cap A\neq\emptyset$ we have that $W(U\sp\prime,m)\subset U$. We claim that $\U_m[f]\subset\U[f]$.

Let $n<\omega$. Then $f(n)\in A$ and consider some $V\in\U_m$ such that $f(n)\in V$. Let $U\in\U$ be such that $f(n)\in\cl{U\sp\prime}\subset U$. Then it follows that $V\subset W(U\sp\prime,m)$. By the choice of $m$ this implies that $V\subset U$. So this in fact proves that $\Star{f(n),\U_m}\subset\Star{f(n),\U}$. Then $\U_m[f]\subset\U[f]$ and we have finished the proof of this result.
\end{proof}

Todorcevic \cite{todorcevic-handbook} defines an Aronszajn continuum to be a linearly ordered continuum that is first countable, not separable and $\omega$-monolithic. From this definition it immediately follows that every closed and separable subspace of an  Aronszajn continuum is of type $G_\delta$. According to \cite[Proposition 3.6]{todorcevic-handbook}, a linearly ordered continuum is an Aronszajn continuum if it is the Dedekind completion of a densely ordered Aronszajn line. Thus, Aronszajn continua exist in ZFC and provide a non-metrizable example with its uniform power first countable.

\begin{ex}
Any Aronszajn continuum $X$ is a non-metrizable space such that $\unifpow{X}$ is first countable.
\end{ex}

\section{On $\omega_1$}\label{sectomega1}

Notice that the proof of Theorem \ref{main} works for any subspace of a $\Sigma$-product, provided that we use the uniformity defined by the power ${}\sp\kappa{I}$. Notice that $\omega_1$ can in fact be embedded in $\Sigma{{}\sp{\omega_1}{I}}$ by sending $\alpha<\omega_1$ to the function equal to $1$ for all $\beta<\alpha$ and $0$ when $\alpha\leq\beta<\omega_1$. Further, $\omega_1$ has a unique uniformity (because $\omega_1+1$ is its only compactification). Thus, the following result is in fact a corollary of our proof of Theorem \ref{main}. 

\begin{coro} (J. Hankins)
$\unifpow{\omega_1}$ is proximal.
\end{coro}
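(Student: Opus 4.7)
The plan is to invoke the remark just made: the proof of Theorem~\ref{main} adapts to any subspace $X$ of a $\Sigma$-product once we use the uniformity inherited from the ambient power ${}\sp{\omega_1}{I}$. For $X=\omega_1$ the embedding $\alpha\mapsto\chi_\alpha$ described above realizes $\omega_1$ as a subspace of $\Sigma{}\sp{\omega_1}{I}$; and since $\omega_1+1$ is the only compactification of $\omega_1$, the inherited uniformity coincides with the unique compatible uniformity of $\omega_1$, so $\unifpow{\omega_1}$ is unambiguously defined.

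Now I would imitate the winning strategy from the proof of Theorem~\ref{main}. Given Player 2's first $n$ moves $f_0,\ldots,f_{n-1}\in\unifpow{\omega_1}$, write $f_k(i)=\chi_{\alpha_{k,i}}$ with $\alpha_{k,i}<\omega_1$ and fix once and for all an enumeration of each support $\supp{f_k(i)}=\alpha_{k,i}$. Let $F_n\subset\omega_1$ consist of the first $n$ entries of each of these enumerations for $k,i<n$, and have Player 1 respond with $\widetilde{U_n}$ where $U_n=U_{\omega_1}(F_n,n)$. Then $A=\bigcup_{n<\omega}F_n$ is a countable set containing $\supp{f_k(i)}$ for every $k,i$, and the argument of Theorem~\ref{main} applies verbatim: the subspace $P=\{x\in{}\sp{\omega_1}{I}:\supp{x}\subset A\}$ is metrizable, each $f_n$ lies in $\unifpow{P}$, and the Cauchy condition forced by the game drives $\{f_n\}_{n<\omega}$ to a limit $f\in\unifpow{P}$.

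The step beyond Theorem~\ref{main}\,--and the place where I expect the real (though mild) work to happen--is the verification that this limit actually lies in $\unifpow{\omega_1}$, since the original proof used compactness of $X$ to close $X\cap P$ inside $P$. The fix is the observation that $A=\bigcup_{k,i<\omega}\alpha_{k,i}$ is a union of ordinals, hence itself a downward-closed subset of $\omega_1$; thus $A$ is a countable ordinal $\alpha$, the intersection $\omega_1\cap P$ equals the compact initial segment $\alpha+1$, and this is closed in $P$. Since each $f_n$ takes values in $\alpha+1$, so does the limit $f$, giving $f\in\unifpow{(\alpha+1)}\subset\unifpow{\omega_1}$ and showing that Player 1's strategy is winning.
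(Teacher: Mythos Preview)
Your argument is correct and follows the paper's own route: embed $\omega_1$ into $\Sigma{}\sp{\omega_1}{I}$, observe that $\omega_1$ has a unique compatible uniformity, and rerun the strategy from the proof of Theorem~\ref{main}. The paper simply asserts that the proof of Theorem~\ref{main} works for any subspace of a $\Sigma$-product and leaves it at that; your explicit check that $A$ is a countable ordinal and hence $\omega_1\cap P=\alpha+1$ is compact (so the limit $f$ stays in $\unifpow{\omega_1}$) supplies the one detail the paper leaves implicit.
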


However, since $\omega_1+1$ is not Fr\'echet-Urysohn, our results cannot tell us whether the countable uniform power of this space is normal.

\begin{ques} (J. R. Bell, \cite{bell-proximal})
Is $\unifpow{(\omega_1+1)}$ normal?
\end{ques}

In this section, we would like to study another special property of $\unifpow{(\omega_1+1)}$, namely, that it is the maximal realcompactification of $\unifpow{\omega_1}$, see Corollary \ref{realcompactification} bellow.

\begin{propo}\label{extending}
$\unifpow{\omega_1}$ is $C$-embedded in $\unifpow{(\omega_1+1)}$.
\end{propo}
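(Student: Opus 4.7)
The plan is to build the extension $\tilde\phi$ of a given continuous $\phi:\unifpow{\omega_1}\to\R$ by exploiting the classical fact that every continuous function $\omega_1\to\R$ is eventually constant. For $f\in\unifpow{(\omega_1+1)}$, let $S_f=f^{-1}(\{\omega_1\})$, and for each $\alpha\leq\omega_1$ let $f^\alpha\in(\omega_1+1)^\omega$ agree with $f$ off $S_f$ and take the constant value $\alpha$ on $S_f$, so that $f^{\omega_1}=f$ and $f^\alpha\in\unifpow{\omega_1}$ for $\alpha<\omega_1$. First I would check that the ``straightening'' $\alpha\mapsto f^\alpha$ is continuous from $\omega_1+1$ into $\unifpow{(\omega_1+1)}$ by inspecting a basic neighbourhood $\prod_n\Star{f^\beta(n),\U}$ of $f^\beta$: the coordinates outside $S_f$ are fixed in $\alpha$, while the coordinates in $S_f$ reduce to the single requirement $\alpha\in\Star{\beta,\U}$, an open neighbourhood of $\beta$. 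Thus $\alpha\mapsto\phi(f^\alpha)$ is continuous $\omega_1\to\R$ and hence constant with some value $c$ on a tail $[\alpha_0,\omega_1)$; set $\tilde\phi(f):=c$. This clearly extends $\phi$ since $S_f=\emptyset$ gives $f^\alpha=f$.

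The technical heart of the proof is the continuity of $\tilde\phi$. Given $\epsilon>0$, I would pick $\alpha^*\geq\alpha_0$ and apply continuity of $\phi$ at $f^{\alpha^*}$ to obtain a finite open cover $\U$ of $\omega_1+1$ such that $|\phi(h)-c|<\epsilon$ whenever $h\in\unifpow{\omega_1}$ satisfies $h(n)\in\Star{f^{\alpha^*}(n),\U}$ for every $n$. I would then refine $\U$ so that its only element $U^*$ containing $\omega_1$ is a tail $(\beta,\omega_1]$, and arrange $\alpha^*>\beta$ so that $\alpha^*\in U^*$. Letting $\delta<\omega_1$ bound the countable set $\{f(n):n\notin S_f\}$ and setting $\gamma=\max(\beta,\delta)$, the clopenness of $[0,\gamma]$ in $\omega_1+1$ allows me to produce a finite star-refinement $\V$ of $\U$ whose unique element meeting $\omega_1$ is $(\gamma,\omega_1]$. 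The candidate neighbourhood is $\tilde N:=\prod_n\Star{f(n),\V}$.

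For $g\in\tilde N$, the design of $\V$ forces $g(n)\leq\gamma$ when $n\notin S_f$ and $g(n)\in(\gamma,\omega_1]$ when $n\in S_f$. A coordinatewise check then shows $g^\alpha(n)\in\Star{f^{\alpha^*}(n),\U}$ for every $\alpha\geq\alpha^*$ and every $n$: outside $S_f$ the star-refinement property suffices; on $S_f\cap S_g$ one has $g^\alpha(n)=\alpha\in U^*\ni\alpha^*$; and on $S_f\setminus S_g$ one has $g(n)>\beta$, so $g(n)$ and $\alpha^*$ both lie in $U^*$. Hence $|\phi(g^\alpha)-c|<\epsilon$ for every $\alpha\geq\alpha^*$, and passing to the limit $\alpha\to\omega_1$ gives $|\tilde\phi(g)-c|\leq\epsilon$.

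The main obstacle is precisely the potential mismatch between $S_f$ and $S_g$: an arbitrary $g$ in a neighbourhood of $f$ may take the value $\omega_1$ at indices $n\notin S_f$, or conversely drop below $\omega_1$ at indices $n\in S_f$, and the cover coming from continuity of $\phi$ at $f^{\alpha^*}$ need not be compatible with such moves. Isolating $\omega_1$ inside the single tail element $(\gamma,\omega_1]$ of $\V$ is exactly what controls this: it simultaneously prevents $g(n)$ from hitting $\omega_1$ at non-$S_f$ positions and forces $g(n)>\beta$ at $S_f$-positions, which is what is needed for the star-containment in $\U$ to go through uniformly in $\alpha\geq\alpha^*$.
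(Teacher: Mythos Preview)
Your definition of $\tilde\phi$ via the straightening $\alpha\mapsto f^\alpha$ and the eventual constancy of $\phi\circ(\alpha\mapsto f^\alpha)$ is exactly what the paper does. The divergence is in the continuity argument, and there your direct construction has a real gap.

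The problem is the clause ``arrange $\alpha^*>\beta$.'' The ordinal $\beta$ is obtained from (a refinement of) the cover $\U$, and $\U$ in turn was produced from continuity of $\phi$ at $f^{\alpha^*}$; so $\beta$ depends on $\alpha^*$, and nothing you have written prevents $\beta\geq\alpha^*$. Refining $\U$ can only shrink the tail element (increase $\beta$), and replacing $\alpha^*$ by a larger ordinal forces you to pick a new $\U$ and hence a new $\beta$, with no guarantee the race ever terminates. Without $\alpha^*\in U^*$ the heart of your verification collapses: on coordinates $n\in S_f$ you need $g^\alpha(n)\in\Star{\alpha^*,\U}$ for cofinally many $\alpha$, which requires $\Star{\alpha^*,\U}$ to contain a tail of $\omega_1$ --- and continuity at a single $f^{\alpha^*}$ simply does not promise that. (Concretely: if $\U$ happens to separate $\alpha^*$ from $\omega_1$, then after your refinement $\Star{\alpha^*,\U}\subset[0,\beta]$, so $g^\alpha(n)=\alpha\notin\Star{\alpha^*,\U}$ for all $\alpha>\beta$.)

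The paper sidesteps this by arguing by contradiction and bringing in the game machinery already developed. Assuming $\hat F$ is not continuous at $g$ (your $f$), one picks witnesses $h_n$ in shrinking neighbourhoods of $g$ with $|F(h_n)-r|\geq 1/(m+1)$. On the $S_f$-coordinates the neighbourhoods are tails, so one forces $h_n$ to be increasing there, converging to some $\beta'$. On the non-$S_f$-coordinates the values of $g$ are bounded by some $\beta<\omega_1$, so $g$ restricted there lives in the proximal (hence $W$-)space $\prod_u\beta$; playing the $W$-convergence game at $g\!\restriction$ against the restrictions $h_n\!\restriction$ forces convergence on those coordinates as well. Thus $h_n\to g_{\beta'}$, contradicting $F(g_{\beta'})=r$. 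The $W$-game is precisely the missing tool that lets the paper control convergence on the bounded coordinates without having to manufacture a single cover that works uniformly in $\alpha$; your direct approach would need something of comparable strength (e.g.\ an elementary-submodel or pressing-down argument to locate $\alpha^*$ with $\beta(\alpha^*)<\alpha^*$) to close the circularity.
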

\begin{proof}
Let $F:\unifpow{\omega_1}\to\R$ be a continuous function. Let $g\in\unifpow{(\omega_1+1)}\setminus\unifpow{\omega_1}$, we will show that $F$ can be continuously extended to $g$.

Let $I=\{n<\omega:g(n)=\omega_1\}$ and $J=\omega\setminus I$. Let $\beta<\omega_1$ be greater than $\sup\{g(n):n\in J\}$. Notice that $\prod_u\sp{I}{\beta}$ is a closed subspace of $\prod_u\sp{I}{\omega_1}$ so it is proximal. In particular, $\prod_u\sp{I}{\beta}$ is a $W$-space.

Let us find the value that we will assign to $g$ under the extension. For each $\alpha<\omega_1$, let $g_\alpha\in\unifpow{\omega_1}$ be such that ${g\!\!\restriction_{I}}\subset g_\alpha$ and $g_\alpha(t)=\alpha$ for $t\in J$. Then mapping $\alpha$ to $g_\alpha$ is an embedding from $\omega_1$ to $\unifpow{\omega_1}$. Thus, $F$ is eventually constant in this copy of $\omega_1$. Then we may assume that $r\in\R$ is such that $F(g_\alpha)=r$ for all $\alpha>\beta$.

Define $\hat{F}=F\cup\{\pair{g,r}\}$. We have to show that $\hat{F}$ is continuous, so assume that it is not and we will reach a contradiction. Thus, there is $m<\omega$ such that for every neighborhood $W$ of $g$ there is $h\in(\unifpow{\omega_1})\cap W$ with $|F(h)-r|\geq\frac{1}{m+1}$. We will construct a sequence of such $\{h_n:n<\omega\}$ and reach a contradiction.

Start playing the game $\Wgame{\prod_u\sp{I}{\beta},g\!\!\restriction_{I}}$ using the winning strategy of player 1. We will play as player 2. In inning $0$, player 1 chooses an open set $U_0\in\prod_u\sp{I}{\beta}$, choose any $h_0\in\unifpow{\omega_1}$ such that $h_0\!\!\restriction_I\in U_0$ and make player 2 play $h_0\!\!\restriction_I$. In inning $n+1<\omega$, player 1 chooses an open set $U_{n+1}$ in $\prod_u\sp{I}{\beta}$. Let $\beta_{n+1}=\sup\{h_n(i):i\in J\}+1$. The set 
$$
W_{n+1}=\{f\in\unifpow{(\omega_1+1)}:f\!\!\restriction_{I}\in U_{n+1}\textrm{ and if }i\in J, f(i)>\beta_{n+1}\}
$$
is easily seen to be a neighborhood of $g$ so choose $h_{n+1}\in(\unifpow{\omega_1})\cap W_{n+1}$ such that $|F(h_{n+1})-r|\geq\frac{1}{m+1}$. Then make player 2 play $h_{n+1}\!\!\restriction_I$ in the game. By the winning strategy of player 1 we obtain that $\{h_n\!\!\restriction_{I}:n<\omega\}$ converges to $g\!\!\restriction_{I}$. Also, if $i\in J$, from the construction it follows that $h_n(i)<\beta_{n+1}<h_{n+1}(i)$ for all $n<\omega$. Let $\beta\sp\prime=\sup\{\beta_n:1\leq n<\omega\}$. Then it is not hard to see that $\{h_n:n<\omega\}$ converges to $g_{\beta\sp\prime}$. By the continuity of $F$, we obtain that $|F(g_{\beta\sp\prime})-r|>0$. This is a contradiction.

Thus, it follows that $\hat{F}$ is a continuous extension of $F$. Since $g$ was arbitrary, it easily follows that $F$ can be continuously extended to all of $\unifpow{(\omega_1+1)}$.
\end{proof}

Notice that $\unifpow{X}$ is never compact if $X$ is compact and $|X|>2$: let $p\neq q$ be points in $X$ and define $f_n\in\unifpow{X}$ be such that $f_n(k)=p$ if $k<n$ and $f_n(k)=q$ if $k\geq n$, then $\{f_n:n<\omega\}$ converges pointwise to the constant function with value $p$ but does not converge uniformly. However, it turns out that realcompactness is preserved.

Recall that a space is realcompact provided that every ultrafilter of zero sets with the countable intersection property is fixed (\cite[3.11.11]{eng}, \cite[Chapter 8]{gill-jer}). Also, recall that a filter of zero sets $\U$ in a topological space $X$ is called \emph{prime} (\cite[1.44]{walker}) if every time $Z_0$ and $Z_1$ are zero sets of $X$ with $Z_0\cup Z_1\in\U$ then there is $i\in 2$ such that $Z_i\in\U$.

\begin{propo}\label{realcompact}
If $X$ is a compactum, then $\unifpow{X}$ is realcompact.
\end{propo}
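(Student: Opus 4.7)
The plan is to apply the classical z-ultrafilter criterion: $\unifpow{X}$ is realcompact if and only if every z-ultrafilter on $\unifpow{X}$ with the countable intersection property is fixed (see \cite[3.11.11]{eng}). Fix such an $\mathcal{F}$; the strategy is to build a point $f \in \unifpow{X}$ witnessing fixedness coordinate by coordinate.

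For each $i < \omega$, let $\pi_i : \unifpow{X} \to X$ be the continuous evaluation $g \mapsto g(i)$, and set $\mathcal{F}_i = \{Z\text{ zero set of }X : \pi_i^{-1}(Z) \in \mathcal{F}\}$. Continuity of $\pi_i$ makes $\mathcal{F}_i$ a z-filter, and it inherits primality from the z-ultrafilter $\mathcal{F}$ (preimages preserve unions and are zero sets). Compactness of $X$ gives $\bigcap \mathcal{F}_i \neq \emptyset$; choose $x_i$ in this intersection and set $f(i) = x_i$.

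The local key claim is that every zero-neighborhood $Z$ of $x_i$ in $X$ lies in $\mathcal{F}_i$. Pick a cozero set $C$ with $x_i \in C \subseteq Z$; then $X \setminus C$ is a zero set missing $x_i$, so $X \setminus C \notin \mathcal{F}_i$ (each element of $\mathcal{F}_i$ contains $x_i$), while $(X \setminus C) \cup Z = X \in \mathcal{F}_i$. Primality forces $Z \in \mathcal{F}_i$, which is the desired ``coordinate convergence'' of $\mathcal{F}_i$ to $x_i$.

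Now suppose for contradiction that $f \notin Z_0$ for some $Z_0 \in \mathcal{F}$. Choose a finite open cover $\U$ of $X$ such that the basic neighborhood $\widehat{\U}[f]$ is disjoint from $Z_0$. For each $i$, use Tychonoffness of $X$ to find a zero-neighborhood $Z_i$ of $f(i)$ inside $\Star{f(i), \U}$; the local claim yields $\pi_i^{-1}(Z_i) \in \mathcal{F}$. Then $Z_\infty = \bigcap_i \pi_i^{-1}(Z_i)$ is a zero set (countable intersections of zero sets are zero sets) contained in $\widehat{\U}[f]$. Since $\mathcal{F}$ is a z-ultrafilter with CIP, it is closed under countable intersections of its elements: otherwise z-maximality produces $Z' \in \mathcal{F}$ disjoint from $Z_\infty$, contradicting CIP applied to $\{Z'\} \cup \{\pi_i^{-1}(Z_i) : i < \omega\}$. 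Hence $Z_\infty \in \mathcal{F}$, but $Z_\infty \cap Z_0 = \emptyset$ with $Z_0 \in \mathcal{F}$, a contradiction. Therefore $f$ belongs to every element of $\mathcal{F}$, so $\mathcal{F}$ is fixed.

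The main obstacle is the local claim, which extracts a coordinatewise limit from the prime but generally non-ultra z-filter $\mathcal{F}_i$; once that is in hand, the proof follows the expected pattern of approximating a uniform-topology basic neighborhood of $f$ from within $\mathcal{F}$ by pulling back coordinate zero-neighborhoods and invoking closure of CIP z-ultrafilters under countable intersections.
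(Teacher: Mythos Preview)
Your proof is correct and follows essentially the same route as the paper's: push the z-ultrafilter along each projection to obtain a prime z-filter on $X$, use its unique cluster point to define the candidate $f$, and then derive a contradiction by trapping a basic $\widehat{\U}[f]$-neighborhood between a countable intersection of pulled-back zero sets and a member of $\mathcal{F}$ missing $f$. The only difference is packaging: the paper cites \cite[1.44, 1.45]{walker} for primality and convergence of $\mathcal{F}_i$, whereas you prove the needed ``local claim'' (zero-neighborhoods of $x_i$ lie in $\mathcal{F}_i$) directly from primality, and you spell out why a CIP z-ultrafilter is closed under countable intersections; both arguments are the standard ones underlying those citations.
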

\begin{proof}
Let $\U$ be an ultrafilter of zero sets of $\unifpow{X}$ with the countable intersection property, we shall prove that $\U$ is fixed.

For each $n<\omega$, let $\pi_n:\unifpow{X}\to X$ be the projection onto the $n$-th coordinate. Since the uniform power topology contains the Tychonoff topology, it follows that $\pi_n$ is a continuous function for all $n<\omega$. Since the intersection of countably many zero sets is a zero set, we have that if $\{Z_n:n<\omega\}$ are zero sets of $X$, then $\prod\{Z_n:n<\omega\}$ is a zero set of $\unifpow{X}$.

Fix $n<\omega$ for a moment. Let $\U(n)=\{Z:Z\textrm{ is a zero set of }X, \pi_n\sp\leftarrow[Z]\in\U\}$. Then $\U(n)$ is a filter of zero sets in $X$ and since $X$ is compact, $\U(n)$ has a cluster point. From \cite[Propositions 1.44 and 1.45]{walker}, it turns out that $\U(n)$ is a prime filter of zero sets of $X$ and converges to some point which we may call $p_n$. Further, notice that if $Z$ is a zero set of $X$, then since $\U(n)$ is prime, $p_n\in Z$ if and only if $\pi_n\sp\leftarrow[Z]\in\U$.

Let $p\in\unifpow{X}$ be such that $p(n)=p_n$ for all $n<\omega$. We will now prove that $\U$ clusters at $p$. Given $n<\omega$, if we choose a zero-set $Z_n$ such that $p_n\in Z_n$, then by the definition of $\U(n)$ we have that $\pi_n\sp\leftarrow[Z_n]\in\U$. From the fact that $\U$ has the countable intersection property it follows that $p\in\prod\{Z_n:n<\omega\}\in\U$.

Now assume that there is $W\in\U$ such that $p\notin W$. Then there is an open cover $\V$ of $X$ such that $\V[p]\cap W=\emptyset$. For each $n<\omega$, let $f_n:X\to\R$ be a continuous function such that $f_n(p_n)=0$ and $f_n\sp\leftarrow[[0,1)]\subset\Star{p_n,\V}$. Then $Z=\prod\{f_n\sp\leftarrow[[0,1/2]]:n<\omega\}$ is a zero set such that $p\in Z\subset\V[p]$. Thus, $Z\in\U$ and $Z\cap W=\emptyset$, which is a contradiction. This completes the proof that $p$ is a cluster point of $\U$ so $\U$ is fixed.
\end{proof}

Then the next follows by \cite[Theorem 8.7]{gill-jer}.

\begin{coro}\label{realcompactification}
$\unifpow{(\omega_1+1)}$ is the Hewitt-Nachbin realcompactification of $\unifpow{\omega_1}$.
\end{coro}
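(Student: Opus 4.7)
The plan is to invoke the standard characterization of the Hewitt--Nachbin realcompactification: namely, that $\upsilon Y$ is (up to homeomorphism) the unique realcompact space containing $Y$ as a dense $C$-embedded subspace (this is the content of Gillman--Jerison Theorem 8.7). The two ingredients about $C$-embedding and realcompactness are already supplied: Proposition \ref{extending} gives the $C$-embedding of $\unifpow{\omega_1}$ in $\unifpow{(\omega_1+1)}$, and Proposition \ref{realcompact}, applied to the compactum $\omega_1+1$, gives that $\unifpow{(\omega_1+1)}$ is realcompact. So the only piece that remains to establish is density.

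To prove density, I would fix $g\in\unifpow{(\omega_1+1)}$ and a basic open neighborhood of the form $\widehat{\U}[g]=\prod_{i<\omega}\Star{g(i),\U}$, where $\U$ is a finite open cover of $\omega_1+1$, and exhibit $h\in\unifpow{\omega_1}$ that lies in this neighborhood. For each coordinate $i<\omega$, there are two cases. If $g(i)<\omega_1$, set $h(i)=g(i)$, which automatically lies in $\Star{g(i),\U}$. If $g(i)=\omega_1$, then $\Star{\omega_1,\U}$ is an open neighborhood of $\omega_1$ in $\omega_1+1$, so it contains a tail $(\alpha_i,\omega_1]$ for some $\alpha_i<\omega_1$; choose any $h(i)\in(\alpha_i,\omega_1)$. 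The resulting $h\colon\omega\to\omega_1$ belongs to $\unifpow{\omega_1}$ and by construction $h\in\widehat{\U}[g]$, so $\unifpow{\omega_1}$ is dense in $\unifpow{(\omega_1+1)}$.

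With density in hand, the three hypotheses of the Gillman--Jerison characterization are satisfied: $\unifpow{\omega_1}$ sits densely and $C$-embedded in the realcompact space $\unifpow{(\omega_1+1)}$. Therefore $\unifpow{(\omega_1+1)}$ is the Hewitt--Nachbin realcompactification of $\unifpow{\omega_1}$, which is what we wanted.

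There is no serious obstacle here: the real content was already done in Propositions \ref{extending} and \ref{realcompact}. Of these, the $C$-embedding result is the substantial one (relying on the $W$-space property of the proximal space $\prod_u^I\beta$), and the density check above is essentially a routine consequence of the fact that $\omega_1$ is dense in $\omega_1+1$ and that basic neighborhoods in the uniform power are products of coordinate neighborhoods.
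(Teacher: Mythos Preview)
Your proof is correct and follows exactly the paper's approach: the paper simply cites \cite[Theorem 8.7]{gill-jer} after establishing Propositions~\ref{extending} and~\ref{realcompact}, leaving density implicit. You have merely made the (routine) density argument explicit, which is a welcome addition rather than a departure.
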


\section{Semiproximal spaces and products of ordinals}\label{sectprod}

In \cite{nyikos-proximal_semiproximal}, P. Nyikos has defined a uniform space to be semi-proximal if player 2 does not have a winning strategy in the proximal game. In that paper, Nyikos asked whether some properties of proximal spaces are also possesed by semi-proximal spaces.

We are specially interested in the question of whether semi-proximal uniform spaces are normal (\cite[Problem 13]{nyikos-proximal_semiproximal}). We show that products of subspaces of ordinals give a negative answer to this question. This example does not answer the natural modification of Nyikos's problem whether there is an example of a nonnormal topological space that is semi proximal with respect to all compatible uniformities. While products of subspaces of ordinals don't seem to provide an example, it is possible that subspaces of products of copies of $\omega_1$ or related spaces could yield a counterexample, e.g., the spaces constructed in \cite{ladder_systems}.

Let us note that if $X$ is a stationary subset of $\omega_1$ with the induced subspace topology, then for any compatible uniformity $\unifU$ and any $U\in\unifU$ there is an $\alpha<\omega_1$ such that $(X\setminus\alpha)\sp2\subseteq U$.

\begin{ques}
Is it true that every topological space that is semi-proximal with respect to every compatible uniformity is normal?
\end{ques}

Let us start by figuring out when a subspace of $\omega_1$ is proximal.

\begin{thm}\label{subspaces}
Let $X\subset\omega_1$.
\begin{itemize}
\item[(a)] If $X$ is non-stationary, then $X$ is proximal.
\item[(b)] Every subspace of $\omega_1$ is semi-proximal.
\item[(c)] If $X$ is a club, then $X$ is proximal.
\item[(d)] If $X$ and $\omega_1\setminus X$ are stationary, then $X$ is not proximal.
\end{itemize}
\end{thm}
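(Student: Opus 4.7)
Parts (a) and (c) are direct. For (a), let $C$ be a club disjoint from the non-stationary set $X$. The complement $\omega_1\setminus C$ decomposes as the topological disjoint sum of the clopen countable intervals $[0,\min C)$ and $(c,c')$ for consecutive $c<c'$ in $C$, each metrizable as a countable ordinal space. Therefore $X\subseteq\omega_1\setminus C$ is metrizable, and so $X$ is proximal. For (c), the increasing enumeration $\omega_1\to X$ of a club $X$ is a homeomorphism (continuity at limit ordinals uses closedness of $X$), and since $\omega_1$ is proximal by Hankins's corollary, so is $X$.

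For (d), suppose for contradiction that $\sigma$ is a winning strategy for player~$1$. For $s\in X^{<\omega}$ let $\alpha_\sigma(s)$ be the ordinal given by the key fact applied to $\sigma(s)$, so $(X\setminus\alpha_\sigma(s))^2\subseteq\sigma(s)$. The set
$$
C=\{\beta<\omega_1:\alpha_\sigma(s)<\beta\text{ for every }s\in(X\cap\beta)^{<\omega}\}
$$
is a club: closedness uses that $s$ is finite, and unboundedness follows from an iterated-countable-sup argument because $(X\cap\beta)^{<\omega}$ is countable. Since $\omega_1\setminus X$ is stationary and the set $\mathrm{Lim}(X)$ of limit points of $X$ is a club, pick $\beta\in C\cap(\omega_1\setminus X)\cap\mathrm{Lim}(X)$. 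Fix a cofinal sequence $\gamma_n\nearrow\beta$ and construct a play by inductively choosing $x_n\in X$ with $\max(\gamma_n,x_{n-1},\alpha_\sigma(x_0,\ldots,x_{n-1}))<x_n<\beta$, which is possible because $\beta\in C$ and $\beta\in\mathrm{Lim}(X)$; the key fact then certifies that $x_n\in U_{n-1}[x_{n-1}]$, where $U_n=\sigma(x_0,\ldots,x_{n-1})$, so the play is legal. The sequence $x_n\to\beta\notin X$ does not converge in $X$, and any $y\in X$ with $y>\beta$ (existing because $X$ is unbounded) satisfies $y,x_n>\alpha_{U_n}$ for every $n$, so $y\in\bigcap_nU_n[x_n]$. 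Hence $\sigma$ loses, a contradiction.

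For (b), the non-stationary case is (a). For the stationary case, fix any compatible uniformity $\unifU$ and a player-$2$ strategy $\tau$, and take a countable elementary submodel $M\prec H(\theta)$ with $\tau,X,\unifU\in M$ and $\delta:=M\cap\omega_1$ an ordinal in the stationary set $X\cap\mathrm{Lim}(X)$. Enumerate the closed entourages in $M\cap\unifU$ and let player~$1$ play closed entourages $U_n\in M\cap\unifU$ that decrease and are cofinal in $M\cap\unifU$; then $x_n:=\tau(U_0,\ldots,U_n)\in M\cap X\subseteq X\cap\delta$, so $\{x_n\}$ lies in the compact metric subspace $Y:=X\cap[0,\delta]$ of $X$. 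If $\{x_n\}$ has a unique accumulation point in $Y$ it converges in $X$ and player~$1$ wins; otherwise let $b_1<b_2$ be distinct accumulation points in $Y$, so $b_1<\delta$. Any hypothetical $y\in\bigcap_nU_n[x_n]$ gives, via subsequential limits and closedness of the $U_n$, the relation $(b_j,y)\in V$ for every closed $V\in M\cap\unifU$ and $j\in\{1,2\}$. Elementarity provides $V\in M$ with $V[b_1]\subseteq[0,b_1+1)\cap X$, forcing $y\leq b_1<\delta$ and hence $y\in M$; a further $V'\in M$ with $V'[y]\subseteq[0,y+1)\cap X$ then yields $(b_2,y)\notin V'$ (since $b_2>y$), contradicting $(b_2,y)\in V'$. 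So $\bigcap_nU_n[x_n]=\emptyset$, and player~$1$ wins. The main obstacle is organising this elementary-submodel step uniformly over all compatible uniformities, especially when $b_2=\delta\notin M$; the key fact (which forces $\alpha_V<\delta$ for every $V\in M$), closedness of the $U_n$, and the clopen order neighborhoods $[0,a+1)\cap X$ of points $a\in M\cap X$ together provide the needed control.
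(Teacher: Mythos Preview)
Parts (a), (c), and (d) are correct. For (c), note that what you cite as Hankins's corollary is the statement that $\unifpow{\omega_1}$ is proximal, not $\omega_1$ itself; the proximality of $\omega_1$ follows directly from Bell's result that $\Sigma$-products are proximal (or is what the paper's own direct proof of (c) establishes). For (d), you replace the paper's elementary-submodel argument by an explicit club closed under $s\mapsto\alpha_\sigma(s)$; this is the same idea unpacked, and slightly more elementary.

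Part (b) has a genuine gap. Your claim that $Y=X\cap[0,\delta]$ is compact is false precisely in the case that matters: if $\omega_1\setminus X$ is also stationary then $\mathrm{Lim}(X)\setminus X$ is stationary, and by elementarity $M$ contains some $\gamma\in\mathrm{Lim}(X)\setminus X$ with $\gamma<\delta$, so $Y$ is not closed in $[0,\delta]$. Your dichotomy is then not exhaustive: $\{x_n\}$ may accumulate in $[0,\delta]$ only at points outside $X$, and even in the two-accumulation-point case one or both $b_j$ may lie outside $X$, so the step ``$(b_j,y)\in V$'' (which needs $b_j\in X$ since $V\subseteq X\times X$) breaks down. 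For an arbitrary compatible uniformity there is no reason any entourage in $M$ should separate the clopen pieces $X\cap[0,\gamma)$ and $X\cap(\gamma,\omega_1)$, so your predetermined cofinal sequence of entourages cannot rule this out. The paper avoids the issue by fixing the uniformity inherited from $\omega_1+1$ and letting player~1 \emph{adapt}: the partition moves $\{\alpha_0,\ldots,\alpha_n\}$ with $\alpha_n\nearrow\delta$ force a split---either player~2 eventually drops into some $X\cap\alpha_n$, a metrizable subspace where player~1 switches to a winning proximal strategy, or the $x_n$ are pushed above every $\alpha_n$ and hence converge to $\delta\in X$. A blind cofinal play does not reproduce this trapping mechanism.
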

\begin{proof}
If $X$ is non-stationary, then there is a club $C$ such that $X\subset\omega_1\setminus C$. Now, $\omega_1\setminus C$ is then a metric space and every metric space is proximal (\cite[Lemma 4.1]{bell-proximal}). Thus, $X$ is also proximal which proves (a).

Now let us prove (b). Let us consider the uniformity induced on $X$ as a subspace of $\omega_1+1$. Since $\omega_1+1$ is compact and $0$-dimensional, the uniformity of $X$ is defined by clopen partitions. So any entourage in this uniformity is given by a clopen partition of $X$. In particular, we may say that player 1 chooses finite subsets $\{\alpha_0,\ldots,\alpha_n\}\subset\omega_1$, where $0<\alpha_0<\alpha_1<\ldots<\alpha_{n-1}<\alpha_n<\omega_1$, that in turn defines a partition.

So assume that player 2 has a strategy in the proximal game for $X$, we will show how to defeat it. Let $\model$ be a countable elementary submodel of a large enough portion of the universe, containing the strategy of player 2. We may assume that $\alpha=\model\cap\omega_1\in X$. Let $\{\alpha_n:n<\omega\}$ be an increasing sequence of ordinals with limit $\alpha$ and let $F_n=\{\alpha_k:k\leq n\}$ for each $n<\omega$. We will call $x_n\in X$ the point chosen by player 2 in inning $n<\omega$. In inning $n=0,1$, make player 1 choose $F_n$. In inning $n+2$, we have two choices.

If $x_{n+1}<\alpha_n$, notice that in future innings, player 2 is forced to play inside $\alpha_n$. Since $\alpha_n+1$ is a compact metric space, it has a unique uniformity in which player 1 has a winning strategy in the proximal game for $\alpha_n$ (\cite[Lemma 4]{bell-proximal}). Since $X\cap(\alpha_n+1)$ has the uniformity induced by the unique uniformity of $\alpha$, player 1 has a winning strategy in $X\cap(\alpha_n+1)$ so from that inning on, player 1 may start applying that strategy and defeat the strategy of player 2.

Otherwise, $\alpha_n\leq x_{n+1}$ so have player 1 choose $F_{n+2}$. So now assume that this is the case for all $n<\omega$ (that is, the first case considered in the paragraph above never happens). By the choice of $\alpha$ and the fact that $F_n\in\model$ for all $n<\omega$, player 2 is forced to play inside $\alpha$ and in fact $\{x_n:n<\omega\}$ converges to $\alpha$. Thus, we have defeated the strategy of player 2 in this case too. This proves that $X$ is semi-proximal whenever $X$ is stationary. This together with (a) imply (b).

The proof of (c) is similar to the proof of (b) but without the need of elementary submodels. Let $x_n\in X$ be the point chosen by player 2 in inning $n<\omega$. Let $\alpha_0<\omega_1$ be arbitrary; player 1 chooses $\alpha_0$ in inning 0. For $n<\omega$, choose $\alpha_{n+1}<\omega_1$ with $x_n<\alpha_{n+1}$. If there is $n<\omega$ such that $x_{n+1}<\alpha_n$, then from that inning on, player 2 is forced to play inside the compact metric space $X\cap(\alpha_n+1)$ and there is a winning strategy for player 1 (\cite[Lemma 4]{bell-proximal}). Otherwise, player 1 chooses $\{\alpha_k:k\leq n\}$ in inning $n<\omega$; if this is always the case $\{x_n:n<\omega\}$ is forced to be an increasing sequence so it converges in $X$. Notice that $X$ is homeomorphic to $\omega_1$ so this is basically the proof that $\omega_1$ is proximal.

Now we prove (d). Notice that this time we have to consider an arbitrary uniformity $\unifU$ for $X$ and prove that any given strategy for player 1 can be defeated. Let $\model$ be a countable elementary submodel of a large enough portion of the universe, containing the strategy of player 1. We may assume that $\alpha=\model\cap\omega_1\in\omega_1\setminus X$. Moreover, since $X$ is stationary, for each $U\in\unifU$ there exists $\alpha(U)<\omega_1$ such that $(X\setminus\alpha(U))\sp2\subset U$; by elementarity we may assume that $\alpha(U)<\alpha$. Let $\{\alpha_n:n<\omega\}$ be a strictly increasing sequence of ordinals converging to $\alpha$. Then the strategy for player 2 in inning $n<\omega$ is to choose $x_n$ such that $\alpha_n<x_n<\omega_1$ and $\alpha(U_n)<x_n$, where $U_n\in\unifU$ is the element of the uniformity chosen in inning $n$ by player 1. Clearly, $\{x_n:n<\omega\}$ converges to $\alpha\notin X$ and $\emptyset\neq(X\setminus\alpha)\sp2\subset\bigcap\{U_n[x_n]:n<\omega\}$. This shows that this defeats the strategy of player 1.
\end{proof}

In \cite{prods_spaces_ordinals}, normality-type properties of products of subspaces of ordinals were studied. In Corollary 3.3 of that paper, the authors prove that a product $A\times B$, where $A,B\subset\omega_1$, is normal if and only if either one of $A$ or $B$ is not stationary or $A\cap B$ is stationary. We will add another property to their list in Corollary \ref{normalsemiproximal}.

If $\pair{X,\unifU_X}$ and $\pair{Y,\unifU}$ are uniform spaces then the sets of the form
$$
[U,V]=\{\pair{\pair{x_0,y_0},\pair{x_1,y_1}}:x_0,x_1\in X; y_0,y_1\in Y; \pair{x_0,x_1}\in U; \pair{y_0,y_1}\in V\},
$$
where $U\in\unifU_X$ and $V\in\unifU_Y$, form a base for the product uniformity of $X\times Y$.

\begin{thm}\label{semiproxtimesprox}
Let $X$ be a proximal uniform space and let $Y$ be a semi-proximal uniform space. Then $X\times Y$ is semi-proximal with the product uniformity.
\end{thm}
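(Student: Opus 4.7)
The plan is to show that no strategy $\sigma$ for player 2 in the proximal game on $X\times Y$ is winning. Fix a winning strategy $\tau$ for player 1 in the proximal game on $X$, provided by the proximality of $X$.

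The structural observation driving the argument is the decomposition $[U,V][\pair{x,y}]=U[x]\times V[y]$. As a consequence, if in a play of the combined game player 1's moves are the basic entourages $[U_n,V_n]$ and player 2's responses are $\pair{x_n,y_n}$, then $\bigcap_n[U_n,V_n][\pair{x_n,y_n}]=(\bigcap_n U_n[x_n])\times(\bigcap_n V_n[y_n])$, and $\{\pair{x_n,y_n}\}$ converges in $X\times Y$ if and only if $\{x_n\}$ converges in $X$ and $\{y_n\}$ converges in $Y$. In particular, player 1 wins the combined game as soon as player 1 wins at least one of the coordinate subgames defined by $(U_n,x_n)$ and by $(V_n,y_n)$.

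I would then define a strategy $\sigma_Y$ for player 2 in the proximal game on $Y$ as follows: given entourages $V_0,\ldots,V_n$ of $Y$, recursively compute $U_i=\tau(x_0,\ldots,x_{i-1})$ and $\pair{x_i,y_i}=\sigma([U_0,V_0],\ldots,[U_i,V_i])$ for $i=0,\ldots,n$, and set $\sigma_Y(V_0,\ldots,V_n)=y_n$. The inclusions $U_{i+1}\subseteq U_i$ (from $\tau$ legal) and $V_{i+1}\subseteq V_i$ (from player 1 legal in $Y$) give $[U_{i+1},V_{i+1}]\subseteq[U_i,V_i]$, so $\sigma$ returns legal responses, and projecting those responses to each coordinate yields a legal play in the $X$-game against $\tau$ and a legal play for $\sigma_Y$ in the $Y$-game.

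Since $Y$ is semi-proximal, $\sigma_Y$ is not winning, so there exists a sequence $V_0,V_1,\ldots$ against which player 1 wins the $Y$-game: $\bigcap_n V_n[y_n]=\emptyset$ or $\{y_n\}$ converges. By the decomposition, together with the fact that $\tau$ wins the induced $X$-game play against the $x_n$'s, player 1 wins the combined game played with entourages $[U_n,V_n]$ against $\sigma$. Hence $\sigma$ is not winning, and $X\times Y$ is semi-proximal. I expect the main obstacle to be purely the bookkeeping in verifying that $\sigma_Y$ depends only on player 1's $Y$-moves and that all induced plays meet the legality requirements of the respective games; the winning-condition argument then reduces cleanly to the coordinatewise decomposition.
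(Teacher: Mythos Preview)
Your proof is correct and follows essentially the same approach as the paper: use player 1's winning strategy $\tau$ on $X$ together with the given strategy $\sigma$ on $X\times Y$ to manufacture a player-2 strategy $\sigma_Y$ on $Y$, defeat $\sigma_Y$ by semi-proximality of $Y$, and combine the resulting $Y$-win with the guaranteed $X$-win from $\tau$ to defeat $\sigma$.

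One small correction to your exposition: the sentence ``player 1 wins the combined game as soon as player 1 wins at least one of the coordinate subgames'' is false as stated. If $\{x_n\}$ converges while $\bigcap_n U_n[x_n]\neq\emptyset$, and meanwhile $\{y_n\}$ fails to converge with $\bigcap_n V_n[y_n]\neq\emptyset$, then player 1 wins the $X$-subgame but loses the combined game. What is true, and what you actually use, is that player 1 wins the combined game once player 1 wins \emph{both} coordinate subgames (by the case split: if either coordinate intersection is empty the product intersection is empty, and otherwise both sequences converge so the pair sequence converges). Since your argument invokes both the $\tau$-win on $X$ and the defeat of $\sigma_Y$ on $Y$, this misstatement does not affect the validity of the proof; just delete or amend that sentence.
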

\begin{proof}
Let $\sigma_X$ denote the winning strategy for player 1 in the proximal game for $X$. Assume that player 2 has a strategy $\sigma_{X\times Y}$ for $X\times Y$, we will see that this strategy cannot be a winning strategy.

Let us first describe a strategy $\sigma_Y$ for player 2 in the proximal game for $Y$. We know that $\sigma_Y$ can be defeated and this will help us defeat $\sigma_{X\times Y}$. We will be playing three proximal games simultaneously: for $X$, $X\times Y$ and $Y$. Thus, we will denote proximal game for a space $Z$ by $\Gamma_Z$.

Call $U_0$ the entourage given by $\sigma_X$ in inning $0$ of $\Gamma_X$. Let $V_0$ be the entourage of $Y$ given by player 1 in $\Gamma_Y$. Then, make player 1 in $\Gamma_{X\times Y}$ play $[U_0,V_0]$. Using $\sigma_\unifU$ we obtain $\pair{x_0,y_0}\in X\times Y$, the response of player 2 in $\Gamma_{X\times Y}$. Then, in $\Gamma_Y$, we define the strategy $\sigma_Y$ in this situation so that player 2 chooses $y_{n+1}$.

Assume now that $n<\omega$, we are in inning $n+1$ of $\Gamma_Y$, player 1 has chosen entourages $V_0,\ldots,V_n$ of $Y$ and the $\sigma_Y$ has been defined in such a way that player 2 has responded with $y_0,y_1,\ldots,y_n$ in $Y$. Additionally, there exist entourages $U_0,\ldots,U_n$ of $X$ so that: (a) $\Gamma_X$ has been played using strategy $\sigma_X$, player 1 has played $U_0,\ldots,U_n$ and player 2 has played $x_0,\ldots,x_n$; (b) $\Gamma_{X\times Y}$ has been played using strategy $\sigma_{X\times Y}$, player 1 has played $[U_0,V_0],\ldots,[U_n,V_n]$ and player 2 has played $\pair{x_0,y_0},\ldots,\pair{x_n,y_n}$.

According to $\sigma_X$, player 1 chooses an entourage $U_n$ in inning $n+1$ of $\Gamma_X$. Now assume that player 1 chooses $V_{n+1}$ in inning $n+1$ of $\Gamma_Y$. Make player 1 in $\Gamma_{X\times Y}$ play $[U_{n+1},V_{n+1}]$ in inning $n+1$. So according to $\sigma_\unifU$, player 2 responds $\pair{x_{n+1},y_{n+1}}$ in $\Gamma_{X\times Y}$. So we define $\sigma_Y$ in this situation by making player 2 choose $y_{n+1}$.

This completely defines $\sigma_Y$. In the following, when we are refering to the sets given by $\sigma_Y$, we will use the same notation as in the definition of $\sigma_Y$ above.

Since $\sigma_Y$ is an strategy for player 2 in $\Gamma_Y$, there is a way to defeat it. This means that there exists $\{V_n:n<\omega\}$ such that either $\{y_n:n<\omega\}$ converges or $\bigcap\{V_n[y_n]:n<\omega\}=\emptyset$. Moreover, since $\sigma_X$ is a winning strategy for player 1 in $\Gamma_X$, then either $\{x_n:n<\omega\}$ converges or $\bigcap\{U_n[x_n]:n<\omega\}=\emptyset$. From this it follows that either $\{\pair{x_n,y_n}:n<\omega\}$ converges or $\bigcap\{[U_n,V_n][\pair{x_n,y_n}]:n<\omega\}=\emptyset$. Thus, we have defeated $\sigma_{X\times Y}$. So $X\times Y$ is semi-proximal.
\end{proof}

In order to analyze semi-proximality in products of stationary subsets of $\omega_1$, we will use the following result. This simply says that any uniformity in such a product has an unbounded section.

\begin{lemma}\label{productunbounded}
Let $A,B\subset\omega_1$ be stationary and let $\unifU$ be a uniformity defined on $A\times B$. Then for each $U\in\unifU$ there exists $\eta<\omega_1$ and a stationary $T\subset\omega_1\setminus\eta$ such that for each $\beta\in T$ there is $\gamma_\beta<\omega_1$ with
$$
((\gamma_\beta,\omega_1)\times(\eta,\beta])\sp2\subset U.
$$
\end{lemma}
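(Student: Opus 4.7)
The plan is to pick a symmetric entourage $V \in \unifU$ with $V \circ V \subseteq U$, identify a ``basic rectangle'' neighborhood of each doubly-limit point in $A \times B$, and then pin down the parameters of these rectangles by applying Fodor's lemma twice.

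First, let $A' = A \cap \{\alpha < \omega_1 : \sup(A \cap \alpha) = \alpha\}$, and define $B'$ analogously; both are stationary because $A$ and $B$ are. Since $\unifU$ induces the subspace topology on $A \times B$ inherited from $\omega_1 \times \omega_1$, for each $(a,b) \in A' \times B'$ the neighborhood $V[(a,b)]$ contains a rectangle
$$
\bigl((f(a,b), a] \cap A\bigr) \times \bigl((g(a,b), b] \cap B\bigr)
$$
with $f(a,b) < a$ and $g(a,b) < b$.

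Now I apply Fodor twice. Fix $\beta \in B'$: the map $a \mapsto g(a,\beta)$ takes $A'$ into the countable ordinal $\beta$, so since a stationary set cannot be partitioned into countably many non-stationary sets, some fibre is stationary. Choose $\eta(\beta) < \beta$ and a stationary $T_\beta \subseteq A'$ on which $g(a,\beta) = \eta(\beta)$ for every $a \in T_\beta$. The map $\eta : B' \to \omega_1$ is regressive, so Fodor's lemma gives a stationary $T \subseteq B'$ and $\eta_0 < \omega_1$ with $\eta(\beta) = \eta_0$ for $\beta \in T$; after trimming we may assume $T \subseteq \omega_1 \setminus \eta_0$. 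For each $\beta \in T$, the regressive function $a \mapsto f(a,\beta)$ on the stationary set $T_\beta$ yields (again by Fodor) $\gamma_\beta < \omega_1$ and a stationary $S_\beta \subseteq T_\beta$ with $f(a,\beta) = \gamma_\beta$ on $S_\beta$.

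To finish, take any $\beta \in T$ and $(a_1,b_1), (a_2,b_2)$ in the strip $((\gamma_\beta,\omega_1)\times(\eta_0,\beta])\cap(A\times B)$. Since $S_\beta$ is unbounded, pick $a \in S_\beta$ above $\max(a_1,a_2)$. Then both $(a_i,b_i)$ lie in the rectangle $((\gamma_\beta, a] \cap A) \times ((\eta_0, \beta] \cap B) \subseteq V[(a,\beta)]$, so by symmetry of $V$ we obtain $((a_1,b_1),(a_2,b_2)) \in V \circ V \subseteq U$. The main obstacle is that the ``vertical parameter'' $g(a,\beta)$ is not regressive in $a$, so Fodor does not apply directly in both variables; the workaround is that $g(\cdot,\beta)$ nevertheless takes values in the countable ordinal $\beta$, which forces a stationary fibre by the countable-partition property of stationary sets.
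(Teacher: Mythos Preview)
Your proof is correct and follows essentially the same route as the paper's: pick $V$ with $V\circ V\subset U$, extract basic rectangular neighborhoods $((\gamma_p,\alpha]\times(\eta_p,\beta])$ at each point, stabilize the parameters by two applications of pressing-down (once in the $A$-direction for each fixed $\beta$, once in the $B$-direction), and then verify the conclusion via the $V\circ V$ argument using an $a\in S_\beta$ above both first coordinates. Your version is in fact slightly tidier than the paper's---you restrict to the doubly-limit points $A'\times B'$ so that the regressive inequalities $f(a,b)<a$, $g(a,b)<b$ are automatic, and you keep explicit track of stationarity (the paper only records $S_\beta\in[A]^{\omega_1}$ and $T\in[B]^{\omega_1}$, whereas the statement asks for $T$ stationary)---but the order in which you stabilize $g$ before $f$, rather than the paper's $\gamma_p$ before $\eta_p$, is an inessential permutation of the same argument.
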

\begin{proof}
Given $U\in\unifU$, consider an entourage $V\in\unifU$ such that $V\circ V\subset U$. For each $p=\pair{\alpha,\beta}\in A\times B$, let $\gamma_p<\alpha$ and $\eta_p<\beta$ such that 
$$
(\ast)\ [(\alpha+1\setminus\gamma_p)\times(\beta+1\setminus\eta_p)]\cap (A\times B)\subset V[p].
$$

Fix $\beta<\omega_1$ for the moment. Since $A$ is stationary, there is $S_\beta\in[A]\sp{\omega_1}$ and $\gamma_\beta<\omega_1$ such that $\gamma_p=\gamma_\beta$ for all $p\in A\times\{\beta\}$. Moreover, we may assume that there is $\eta_\beta<\omega_1$ such that $\eta_p=\eta_\beta$ for all $p\in A\times\{\beta\}$.

Now, since $B$ is stationary, there exists $T\in[B]\sp{\omega_1}$ and $\eta<\omega_1$ such that $\eta_\beta=\eta$ for all $\beta\in T$. Now we prove that these choices are what we are looking for.

Let $x,y\in(\gamma_\beta,\omega_1)\times(\eta,\beta)$ for some $\beta\in T$. Then there is $\alpha\in S_\beta$ such that $x,y\in(\gamma_\beta,\alpha+1)\times(\eta,\beta)$. Define $p=\pair{\alpha,\beta}$. By equation $(\ast)$ above it follows that $x,y\in V[p]$. Thus, $\pair{x,p},\pair{p,y}\in V$ and this implies that $\pair{x,y}\in U$. This concludes the proof of this lemma.
\end{proof}

\begin{thm}\label{prods}
Let $A,B\subset\omega_1$ and consider them with any compatible uniformity. Then $A\cap B$ is stationary if and only if $A\times B$ is semi-proximal.
\end{thm}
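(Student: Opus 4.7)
The plan is to treat the two implications separately, using Lemma \ref{productunbounded} as the uniform-structural input and countable elementary submodels as the combinatorial engine.

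For the forward direction, suppose $A\cap B$ is stationary and let $\sigma$ be any strategy for player 2. I would pick a countable elementary submodel $\model$ of a large enough portion of the universe, containing $\sigma$, $A$, $B$, and the uniformity $\unifU$, with $\alpha:=\model\cap\omega_1\in A\cap B$; this is possible because $A\cap B$ is stationary. Enumerate $\unifU\cap\model$ as $(V_k)_{k<\omega}$ and have player 1 play $U_n:=V_0\cap\cdots\cap V_n$, a decreasing sequence in $\model$ that is cofinal in $\unifU\cap\model$. Every response $p_n=\sigma(U_0,\dots,U_n)$ then lies in $\model$, so $p_n=\pair{x_n,y_n}$ with $x_n\in A\cap\alpha$ and $y_n\in B\cap\alpha$. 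I would analyze the accumulation points of $(p_n)$ inside the compactum $(\omega_1+1)^2$: Lemma \ref{productunbounded} inside $\model$ yields, for each $n$, an ordinal $\eta_n<\alpha$ and a stationary $T_n\in\model$ such that $U_n$ contains the big rectangle indexed by any $\beta\in T_n$, and by elementarity $T_n\cap\alpha$ is cofinal in $\alpha$. A separation argument then shows that any accumulation point of $(p_n)$ other than $\pair{\alpha,\alpha}$ is separated from the tail of the sequence by some entourage in $\model$; consequently either $p_n\to\pair{\alpha,\alpha}\in A\times B$ or $\bigcap_n U_n[p_n]=\emptyset$, and player 1 wins.

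For the reverse direction, suppose $A\cap B$ is non-stationary and fix a club $C\subset\omega_1$ disjoint from $A\cap B$. I will describe a winning strategy for player 2. In inning $n$, upon seeing $U_n$, apply Lemma \ref{productunbounded} to produce $\eta_n$, a stationary $T_n$, and ordinals $\gamma_\beta^{(n)}$ with $((\gamma_\beta^{(n)},\omega_1)\times(\eta_n,\beta])^2\subset U_n$ for each $\beta\in T_n$. Player 2 selects $\beta_n\in T_n\cap C$ above all previously chosen parameters and picks $p_n=\pair{x_n,y_n}$ in the intersection of the $n$-th and $(n-1)$-th big rectangles, which forces $p_n\in U_{n-1}[p_{n-1}]$ via the square condition. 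Since $C$ is closed and $(\beta_n)$ is increasing inside $C$, the supremum $\alpha:=\sup_n\beta_n$ lies in $C$ and hence outside $A\cap B$; by construction $x_n,y_n\to\alpha$, so $p_n\to\pair{\alpha,\alpha}$ in $\omega_1\times\omega_1$ but not in $A\times B$, and the sequence fails to converge in $A\times B$. Each $U_n[p_n]$ contains the $n$-th big rectangle, and careful bookkeeping then yields $\bigcap_n U_n[p_n]\neq\emptyset$.

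The hardest part will be the reverse direction, specifically keeping $\bigcap_n U_n[p_n]$ nonempty when player 1 pushes $\eta_n$ upward, which threatens to drive the second-coordinate intersection of the nested rectangles down to the empty set. The remedy I anticipate is to apply Fodor's lemma to the regressive map $\beta\mapsto\gamma_\beta^{(n)}$ on $T_n$, restrict to a stationary subset where $\gamma_\beta^{(n)}$ is bounded by a fixed ordinal, and then pick $\beta_n$ to dominate $\eta_0,\dots,\eta_n$ simultaneously, so that the nested rectangles overlap in a fixed cofinal rectangle whose intersection with $A\times B$ is nonempty.
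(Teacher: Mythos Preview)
Your forward direction diverges from the paper's and has a real gap. The paper does \emph{not} play a cofinal sequence of entourages from $\model$ and argue by accumulation points; instead it works in the concrete product uniformity, having player~1 play the partitions $\U_{\beta_n}$ determined by an increasing sequence $\beta_n\nearrow\alpha$. The crucial step you are missing is a case split backed by Theorem~\ref{semiproxtimesprox}: if at some stage player~2's point lands in a cell other than $(A\setminus\beta_n)\times(B\setminus\beta_n)$, the rest of the game is confined to a rectangle with one bounded side, which is (proximal)$\times$(semi-proximal) and hence semi-proximal, so player~1 can defeat $\sigma$ there; otherwise the choices are forced upward and converge to $\pair{\alpha,\alpha}$. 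Your dichotomy ``either $p_n\to\pair{\alpha,\alpha}$ or $\bigcap_n U_n[p_n]=\emptyset$'' is not justified and is in fact suspicious: by Lemma~\ref{productunbounded} applied inside $\model$, every $U_n\in\model$ contains squares of large rectangles indexed by ordinals below $\alpha$, so $\bigcap_n U_n[p_n]$ can easily be nonempty (for instance containing points with first coordinate above $\alpha$) even when $(p_n)$ fails to converge in $A\times B$. The ``separation argument'' you invoke would have to show convergence outright, and nothing in your sketch does that. The reduction to Theorem~\ref{semiproxtimesprox} is exactly what rescues the bounded-side case.

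Your reverse direction is close in spirit to the paper's but the mechanics differ. The paper does not try to keep $p_n$ simultaneously in the $n$-th and $(n{-}1)$-th rectangles, and it does not use Fodor. Instead it first passes to $V_m$ with $V_m\circ V_m\subset U_m$, applies Lemma~\ref{productunbounded} to $V_m$, and plays $p_m=\pair{x_m,y_m}$ with $y_m<x_m$ and $y_m<x_m<y_{m+1}$, so that $p_n\to\pair{c,c}$ for some $c\in C$. To witness $\bigcap_n U_n[p_n]\neq\emptyset$ it then picks, \emph{after} the play, fresh $\beta_n'\in T_n$ above $c$ and a single point $\pair{p,q}$ dominating all the $\gamma$'s; the $V\circ V$ trick lets one hop from $\pair{x_n,y_n}$ to $\pair{p,y_n}$ and then to $\pair{p,q}$ inside $U_n$. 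Your worry that player~1 can push $\eta_n$ above $\beta_{n-1}$ is exactly why insisting that $p_n$ lie in the $n$-th rectangle is problematic; your Fodor remedy bounds the $\gamma_\beta^{(n)}$'s but does nothing about $\eta_n$, so the second-coordinate overlap you need may be empty. The paper sidesteps this by decoupling the rectangles used during play from those used to exhibit the common point.
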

\begin{proof}
First, assume that $A\cap B$ is stationary, we have to prove that $A\times B$ is semi-proximal. The proof of this is similar to the proof of (b) Theorem \ref{subspaces}. So consider $A\times B$ with the product uniformity. Assume that player 2 has a winning strategy in the proximal game and let $\model$ be a countable elementary submodel of the universe that contains this strategy and $A,B\in\model$. We may assume that $\alpha=\model\cap\omega_1\in A\cap B$.

For every $\beta<\omega_1$, consider the following finite clopen partition of $A\times B$
$$
\U_\beta=(\beta\times\beta)\cup((A\setminus\beta)\times\beta)\cup(\beta\times(B\setminus\beta))\cup((A\setminus\beta)\times(B\setminus\beta)).
$$
Also, choose a strictly increasing sequence $\{\alpha_n:n<\omega\}$ that converges to $\alpha$.

In innings $n=0,1$, let $\beta_n=\alpha_n$ and choose $U_n=\bigcup\{U\sp2:U\in\V_{\beta_n}\}$, where $\V_0=\U_{\beta_0}$ and $\V_1$ is the common refinement of $\U_{\beta_0}$ and $\U_{\beta_1}$. Call $\pair{x_n,y_n}\in A\times B$ the point chosen by player 2 in inning $n<\omega$. Assume that for each $n<\omega$, in inning $n+1$ we have chosen $\beta_{n+1}$ such that $\beta_n<\beta_{n+1}<\omega_1$ and player 1 has chosen $U_{n+1}=\bigcup\{U\sp2:U\in\V_{n+1}\}$, where $\V_{n+1}$ is the common refinement of $\U_{\beta_0},\U_{\beta_1},\ldots,\U_{\beta_{n+1}}$. Now let $n<\omega$, we will analyse what to do in inning $n+2$. Notice that $\pair{x_{n+1},y_{n+1}}\in W\sp2$ for some $W\in\V_n$. Moreover, this $W$ is of the form $C\times D$, where $C$ and $D$ are intervals of $A$ and $B$, respectively.

If $W\neq(A\setminus\beta_n)\times(B\setminus\beta_n)$, then the rest of the game will take place inside a rectangle one of whose sides is metric and thus, $W$ is semi-proximal by Theorem \ref{semiproxtimesprox}. So there is a way to defeat the strategy of player 2 in this case.

So assume that $W=(A\setminus\beta_n)\times(B\setminus\beta_n)$. Then let $\beta_{n+2}\in\model$ be such that $\beta_{n+1},x_{n+1},y_{n+1}<\beta_{n+2}$ and make player 1 choose $U_{n+2}=\bigcup\{U\sp2:U\in\V_{n+2}\}$, where $\V_{n+2}$ is the common refinement of $\U_{\beta_0},\U_{\beta_1},\ldots,\U_{\beta_{n+2}}$. If this is the case for all $n<\omega$, then we obtain that in fact $\{\pair{x_n,y_n}:n<\omega\}$ converges to $\pair{\alpha,\alpha}$. Thus, we have defeated the strategy of player 1.

Now assume that $A\cap B$ is not stationary. We now have to show that player 2 has a winning strategy. Let $C\subset\omega_1$ be a club disjoint from $A\cap B$. Notice that $C$ intersects both $A$ and $B$ unboundedly often. We shall define the point $\pair{x_n,y_n}$ chosen by player 2 in inning $n$ in such a way that 
\begin{itemize}
\item[(a)] $\{x_n:n<\omega\}\subset A\cap C$,
\item[(b)] $\{y_n:n<\omega\}\subset B\cap C$ and 
\item[(c)] $y_n<x_n<y_{n+1}$ for every $n<\omega$.
\end{itemize}

In inning $m<\omega$, assume that player 1 has chosen an entourage $U_m$. Also, by induction we will assume that the sequence $\{x_n:n<m\}$ constructed so far so that conditions (a), (b) and (c) above hold for $n<m$. Let $V_m$ be an entourage such that $V_m\circ V_m\subset U_m$. Apply Lemma \ref{productunbounded} to $V_m$ and let $\eta_m<\omega_1$ and $T_m\subset\eta_m$ the sets thus obtained.

Let $\beta_m\in T_m$ be such that $\max{(\{x_n:n<m\}\cup\{y_n:n<m\})}+\omega\leq\beta_m$. Recall that there is $\gamma_{\beta_m}$ with the properties in the statement of Lemma \ref{productunbounded}. Choose $y_m\in B\cap C\cap\beta_m$ such that $\max{(\{x_n:n<m\}\cup\{y_n:n<m\})}+1<y_m$. After this, choose $x_m\in A\cap C$ such that $y_m<x_m$ and $\gamma_{\beta_m}<x_m$. Then in this inning, player 2 should play $\pair{x_m,y_m}$. This completes the definition of the strategy.

It follows that there is $c\in C$ such that $\{\pair{x_n,y_n}:n<\omega\}$ converges to $\pair{c,c}\notin A\times B$. Then we only need to prove that $\bigcap\{U_n[\pair{x_n,y_n}]:n<\omega\}\neq\emptyset$ to show that this strategy is winning for player 2.

For each $n<\omega$, let $\beta\sp\prime_n\in T_n$ be such that $c+\omega\leq\beta\sp\prime_n$ and let $\gamma_{\beta\sp\prime_n}<\omega_1$ be given with the properties in the statement of Lemma \ref{productunbounded}. Let $p\in A$ and $q\in B$ be such that $\gamma_{\beta_n}<p$, $\gamma_{\beta\sp\prime_n}<p$ and $q<\beta\sp\prime_n$ for all $n<\omega$. We claim that $\pair{p,q}\in\bigcap\{U_n[\pair{x_n,y_n}]:n<\omega\}$.

Take $n<\omega$, let us see that $\pair{p,q}\in U_n[\pair{x_n,y_n}]$. Consider the point $\pair{p,y_n}\in [(\gamma_{\beta_n},\omega_1)\times(\eta,\beta_n]]\cap[(\gamma_{\beta\sp\prime_n},\omega_1)\times(\eta,\beta\sp\prime_n]]$. Since both $\pair{x_n,y_n}$ and $\pair{p,y_n}$ are elements of $(\gamma_{\beta_n},\omega_1)\times(\eta,\beta_n]$, by Lemma \ref{productunbounded} we obtain that $\pair{\pair{x_n,y_n},\pair{p,y_n}}\in V_n$. Since both $\pair{p,y_n}$ and $\pair{p,q}$ are elements of $(\gamma_{\beta\sp\prime_n},\omega_1)\times(\eta,\beta\sp\prime_n]$, by Lemma \ref{productunbounded} we obtain that $\pair{\pair{p,y_n},\pair{p,q}}\in V_n$. By the choice of $V_n$ this implies that $\pair{\pair{x_n,y_n},\pair{p,q}}\in U_n$. Thus, $\pair{p,q}\in U_n[\pair{x_n,y_n}]$.

Thus, this strategy makes player 2 win.
\end{proof}

From this and Corollary 3.3 from \cite{prods_spaces_ordinals}, the next follows immediately.

\begin{coro}\label{normalsemiproximal}
If $A,B\subset\omega_1$, then the following conditions are equivalent.
\begin{itemize}
\item[(a)] $A\times B$ is collectionwise normal,
\item[(b)] $A\times B$ is normal,
\item[(c)] either one of $A$ or $B$ is not stationary or $A\cap B$ is stationary, and
\item[(d)] $A\times B$ is semi-proximal.
\end{itemize}
\end{coro}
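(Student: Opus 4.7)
The plan is to combine Corollary 3.3 of \cite{prods_spaces_ordinals} with Theorems \ref{subspaces}, \ref{semiproxtimesprox}, and \ref{prods}. The implication (a) $\Rightarrow$ (b) is immediate, and the equivalence (a) $\Leftrightarrow$ (b) $\Leftrightarrow$ (c) is exactly the content of Corollary 3.3 of \cite{prods_spaces_ordinals}, which handles collectionwise normality as well as normality. Hence the only substantive task is to establish (c) $\Leftrightarrow$ (d).

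For (c) $\Rightarrow$ (d) I would split into two cases according to which disjunct of (c) holds. If $A \cap B$ is stationary, Theorem \ref{prods} immediately yields that $A \times B$ is semi-proximal, giving (d). If instead some factor, say $A$ without loss of generality, is non-stationary, then Theorem \ref{subspaces}(a) makes $A$ proximal while Theorem \ref{subspaces}(b) makes $B$ semi-proximal; applying Theorem \ref{semiproxtimesprox} to this pair then shows that the product uniformity on $A \times B$ is semi-proximal, which is (d).

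For (d) $\Rightarrow$ (c) I would argue by contrapositive: if (c) fails then both $A$ and $B$ are stationary while $A \cap B$ is non-stationary, whence the ``only if'' direction of Theorem \ref{prods} produces a winning strategy for player 2 in the proximal game on $A \times B$, contradicting (d).

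The main point to watch, really the only subtlety, is the asymmetric way Theorem \ref{prods} interacts with condition (c). Taken in isolation, Theorem \ref{prods} would mislead in the non-stationary case: if $A$ is non-stationary then $A \cap B$ is automatically non-stationary, yet $A \times B$ is nevertheless semi-proximal. This is precisely why Theorems \ref{subspaces}(a) and \ref{semiproxtimesprox} must be invoked separately, and why the case split in (c) $\Rightarrow$ (d) is essential.
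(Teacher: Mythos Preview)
Your argument is correct and follows the same skeleton as the paper's one-line derivation (``From this and Corollary 3.3 from \cite{prods_spaces_ordinals}, the next follows immediately''), but you have been more careful than the paper at a point that genuinely needs care.

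The issue you flag in your last paragraph is real: as stated, Theorem~\ref{prods} asserts that $A\cap B$ is stationary \emph{iff} $A\times B$ is semi-proximal, with no hypothesis that $A$ and $B$ themselves be stationary; yet the ``only if'' direction of its proof uses Lemma~\ref{productunbounded} and the claim that a club disjoint from $A\cap B$ meets both $A$ and $B$ unboundedly, both of which require $A$ and $B$ to be stationary. Read literally, Theorem~\ref{prods} is false when one factor is non-stationary (then $A\cap B$ is non-stationary but $A\times B$ is still semi-proximal), so it cannot by itself give $(c)\Leftrightarrow(d)$. Your case split repairs this: for $(c)\Rightarrow(d)$ in the non-stationary-factor case you correctly appeal to Theorem~\ref{subspaces}(a),(b) together with Theorem~\ref{semiproxtimesprox}; and for $(d)\Rightarrow(c)$ the contrapositive lands precisely in the situation (both stationary, intersection non-stationary) where the proof of Theorem~\ref{prods} is valid. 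So your proof is the one the paper's sentence should unpack to.
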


Notice that we also obtain that the product of two semi-proximal spaces that are Fr\'echet may have their product not semi-proximal; this answers Problem 14 in \cite{nyikos-proximal_semiproximal}.

\begin{coro}
There are two semi-proximal spaces $A,B\subset\omega_1$ such that $A\times B$ is not semi-proximal.
\end{coro}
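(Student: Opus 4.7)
The plan is to read the example off Corollary \ref{normalsemiproximal} together with Theorem \ref{subspaces}(b). Theorem \ref{subspaces}(b) says that every subspace of $\omega_1$ is semi-proximal, so any candidate pair $A, B \subset \omega_1$ will automatically give semi-proximal factors. The remaining task is to arrange that $A \times B$ fails to be semi-proximal, and by Corollary \ref{normalsemiproximal} this happens precisely when both $A$ and $B$ are stationary while $A \cap B$ is non-stationary.

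Such a pair exists in ZFC by Solovay's classical theorem: $\omega_1$ can be partitioned into two (in fact $\omega_1$ many) pairwise disjoint stationary sets. So I would fix any decomposition $\omega_1 = A \sqcup B$ with both $A$ and $B$ stationary. Then $A \cap B = \emptyset$ is non-stationary, so condition (c) of Corollary \ref{normalsemiproximal} fails for this pair; equivalently, condition (d) fails, i.e., $A \times B$ is not semi-proximal. Note that by Theorem \ref{prods} this conclusion is independent of the compatible uniformities chosen on $A$ and $B$, so there is no hidden ambiguity in the product uniformity. Meanwhile, $A$ and $B$ are semi-proximal by Theorem \ref{subspaces}(b).

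No real obstacle arises: the statement is a direct consequence of material already established in the paper, and the only external input is Solovay's theorem on splitting $\omega_1$ into stationary pieces, which is standard ZFC.
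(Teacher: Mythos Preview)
Your proposal is correct and matches the paper's own (implicit) argument: the corollary is stated without proof, as it follows immediately from Theorem~\ref{subspaces}(b) and Theorem~\ref{prods}/Corollary~\ref{normalsemiproximal} by taking any two disjoint stationary subsets of $\omega_1$. One small remark: you invoke Solovay's theorem, but the weaker fact that $\omega_1$ splits into two disjoint stationary sets (e.g., via Ulam matrices) already suffices.
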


\section{The topological proximal game}\label{topproxgame}

Now we consider a variation of the proximal game on a topological space where player 1 plays open covers instead of elements of a uniformity. Given a topological space $X$, in inning $n<\omega$ of the game, player 1 plays an open cover $\U_n$ and player 2 a point $x_n\in X$. The choice of player 2 must lie in the star of the previous point with respect to the previous open cover: $x_{n+1}\in\Star{x_n,\U_n}$ for each $n<\omega$. Player 1 wins this instance of the game if either $\{x_n:n<\omega\}$ converges or $\bigcap\{\Star{x_n,\U_n}:n<\omega\}=\emptyset$. Call this game the \emph{topological proximal game}, $\tpgame{X}$ for short. The following follows easily.

\begin{thm}\label{proxvstopprox}
Suposse that $X$ is any topological space. 
\begin{itemize}
\item[(a)] If $\pair{X,\unifU}$ is proximal for some uniformity $\unifU$, then player 1 has a winning strategy in $\tpgame{X}$. 
\item[(b)] If player 2 has a winning strategy in $\tpgame{X}$, then $\pair{X,\unifU}$ is not semi-proximal for any uniformity $\unifU$.
\end{itemize}
\end{thm}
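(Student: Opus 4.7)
The plan is to interpolate between the two games via a simple dictionary from entourages to open covers. The key auxiliary fact I would establish first: for any compatible uniformity $\unifU$ on $X$ and any $U\in\unifU$, there is an open cover $\U(U)$ of $X$ such that $\Star{x,\U(U)}\subset U[x]$ for every $x\in X$. For this, pick a symmetric open entourage $V\in\unifU$ with $V\circ V\subset U$ and set $\U(U)=\{V[x]:x\in X\}$; the usual computation using the symmetry of $V$ shows that if $z\in\Star{y,\U(U)}$ then $z\in V[x]$ for some $x$ with $y\in V[x]$, hence $\pair{y,z}\in V\circ V\subset U$.

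For (a), I would take a winning strategy $\sigma$ for player 1 in the proximal game on $\pair{X,\unifU}$ and simulate it inside $\tpgame{X}$: whenever $\sigma$ prescribes an entourage $U_n$, player 1 in $\tpgame{X}$ plays the cover $\U(U_n)$. Any response $x_{n+1}\in\Star{x_n,\U(U_n)}$ of player 2 satisfies $x_{n+1}\in U_n[x_n]$ by the star inclusion, so the simulated proximal game stays legal. If $\sigma$ wins, then either $\{x_n:n<\omega\}$ converges (which settles $\tpgame{X}$ too) or $\bigcap_n U_n[x_n]=\emptyset$, in which case $\bigcap_n\Star{x_n,\U(U_n)}\subset\bigcap_n U_n[x_n]=\emptyset$ as well.

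For (b), the simulation runs in the opposite direction with the same dictionary. Fix an arbitrary compatible uniformity $\unifU$ on $X$ and a winning strategy $\sigma$ for player 2 in $\tpgame{X}$. I would play the proximal game on $\pair{X,\unifU}$ by translating each entourage $U_n$ chosen by player 1 into the cover $\U(U_n)$, feeding $\U(U_n)$ to $\sigma$, and responding with the point $\sigma$ returns. The star inclusion again ensures $x_{n+1}\in U_n[x_n]$, so the induced play is legal; because $\sigma$ defeats player 1 in $\tpgame{X}$, $\{x_n:n<\omega\}$ does not converge while $\bigcap_n\Star{x_n,\U(U_n)}\neq\emptyset$, and this nonempty intersection sits inside $\bigcap_n U_n[x_n]$. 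Hence the translated strategy is winning for player 2 in the proximal game, so $\pair{X,\unifU}$ is not semi-proximal.

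No real obstacle arises: both parts are bookkeeping once the dictionary is available, and since $\tpgame{X}$ imposes no refinement condition between successive covers, there is nothing extra to maintain across innings. The only point that requires a moment's care is the star inclusion in the auxiliary fact, which is the standard use of $V\circ V\subset U$ with $V$ open and symmetric.
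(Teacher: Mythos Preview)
Your argument is correct and is precisely the routine translation the paper has in mind when it writes ``The following follows easily'' without further proof; the dictionary $U\mapsto\U(U)$ via a symmetric open $V$ with $V\circ V\subset U$ is the standard move, and your bookkeeping for both directions is accurate.
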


On the other hand, the topological proximal game is not equivalent to the proximal game as at least (b) in Theorem \ref{proxvstopprox} cannot be reversed.

\begin{thm}
If $A$ and $B$ are disjoint stationary subsets of $\omega_1$, then player 2 has a winning strategy in the proximal game with respect to any uniformity on $A\times B$ but player 2 has no winning strategy in $\tpgame{A\times B}$.
\end{thm}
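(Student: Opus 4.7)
The first half follows directly from Theorem~\ref{prods}: since $A$ and $B$ are disjoint, $A\cap B=\emptyset$ is not stationary, and the ``not stationary'' direction of the proof of Theorem~\ref{prods}, which invokes Lemma~\ref{productunbounded}, produces a winning strategy for player 2 that works in the proximal game for every compatible uniformity on $A\times B$, not only the product uniformity.

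For the second half I plan to defeat an arbitrary strategy $\sigma$ for player 2 in $\tpgame{A\times B}$ by an elementary submodel argument parallel to the proof of Theorem~\ref{prods}(b). Since $A$ is stationary, choose a countable elementary submodel $M\prec H(\theta)$ containing $\sigma$, $A$ and $B$ such that $\alpha := M\cap\omega_1$ lies in $A$; as usual $\alpha$ is a countable limit ordinal, and since $A\cap B=\emptyset$ we have $\alpha\notin B$. Fix a strictly increasing cofinal sequence $\{\beta_n\}_{n<\omega}$ of successor ordinals in $M$ with $\sup_n\beta_n=\alpha$. At inning $n$, player 1 will play the clopen partition $\U_n\in M$ of $A\times B$ obtained by splitting both coordinates at the finite set $S_n := \{\beta_0,\ldots,\beta_n\}\cup\{a_k,b_k : k<n\}$, where $x_k=(a_k,b_k)$ are player 2's earlier responses. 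By elementarity, $x_n=\sigma(\U_0,\ldots,\U_n)\in M$, so $a_n,b_n<\alpha$.

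From here the analysis splits into two cases, following the template of Theorem~\ref{prods}(b). If at some inning $N$ the box of $\U_N$ containing $x_N$ has both coordinates bounded by some $\beta<\alpha$, the star constraint traps the remainder of the play inside the clopen compact metric subspace $K=(A\cap[0,\beta])\times(B\cap[0,\beta])$. Since $K$ is proximal, Theorem~\ref{proxvstopprox}(a) gives player 1 a winning strategy in $\tpgame{K}$, which player 1 follows by lifting each cover of $K$ to a cover of $A\times B$ (adjoining the clopen complement $(A\times B)\setminus K$, which preserves the star constraints), and so $\sigma$ is defeated. Otherwise the play lies in the top-right tail of $\U_n$ for every large $n$; a short induction shows that $\{a_n\}$ and $\{b_n\}$ are eventually strictly increasing with limit $\alpha$, so $x_n\to(\alpha,\alpha)$ in $(\omega_1+1)^2$, a point outside $A\times B$, and $\{x_n\}$ does not converge in $A\times B$.

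The delicate step, which is the main obstacle, is to verify in this second case that $\bigcap_n\Star{x_n,\U_n}=\emptyset$ in $A\times B$. With only the partitions above, the tail boxes intersect down to $(\{\alpha\}\cup(A\cap(\alpha,\omega_1)))\times(B\cap(\alpha,\omega_1))$, which is nonempty because no open set definable from elements of $M$ can separate $\alpha$ from the open tail $(\max S_n,\omega_1)$. The fix is to enrich each $\U_n$ --- still keeping the cover inside $M$ --- with further open sets that exploit the known inequalities $a_k,b_k<\alpha$ to separate the bounded sequence $\{x_n\}$ from any candidate competitor having a coordinate $\geq\alpha$; with a careful choice the stars shrink around the single cluster point $(\alpha,\alpha)\in(\omega_1+1)^2$, leaving no element of $A\times B$ in their intersection and completing the defeat of $\sigma$.
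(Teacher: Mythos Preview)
The first half is fine. For the second half, your elementary-submodel-plus-clopen-partition plan has a genuine gap precisely where you flag it, and the proposed fix cannot work. In your Case~2 the star of $x_n$ with respect to the partition $\U_n$ is the single block containing $x_n$, namely the top-right tail $(A\setminus\max S_n)\times(B\setminus\max S_n)$; the intersection over $n$ is $(A\setminus\alpha)\times(B\setminus\alpha)\neq\emptyset$, and since $\{x_n\}$ does not converge in $A\times B$, player~2 wins. Your ``enrichment'' does not repair this: adjoining further open sets to a cover can only \emph{enlarge} the star, never shrink it, while replacing $\U_n$ by a refinement in $M$ still leaves an unbounded top-right block, because $\alpha=M\cap\omega_1$ is not available as a parameter in $M$. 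More conceptually, finite clopen partitions of $A\times B$ generate a compatible uniformity via Lemma~\ref{covers}, so restricting player~1 to such covers reduces $\tpgame{A\times B}$ to the proximal game for that uniformity, in which player~2 wins by the first half of the theorem. (You also silently skip the mixed boxes, bounded in exactly one coordinate; these reduce to a semi-proximal rectangle rather than a proximal one, so you would need Theorem~\ref{proxvstopprox}(b), not~(a), there.)

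The paper's argument is short and uses the extra freedom of the topological game in an essential way. Player~1 opens with the single \emph{infinite, non-partition} cover
\[
\U_0=\{(0,\alpha]\times(\alpha,\omega_1):\alpha\in A\}\cup\{(\beta,\omega_1)\times(0,\beta]:\beta\in B\},
\]
which covers $A\times B$ precisely because $A\cap B=\emptyset$. The star of any point with respect to $\U_0$ is bounded in one coordinate, so after a single move the play is trapped in a rectangle of the form $A\times(B\cap\gamma)$ or $(A\cap\gamma)\times B$ with $\gamma<\omega_1$; such a rectangle is semi-proximal by Theorems~\ref{subspaces} and~\ref{semiproxtimesprox}, and Theorem~\ref{proxvstopprox}(b) then defeats $\sigma$. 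No elementary submodel is needed; the point is to play a cover that does not arise from any compatible uniformity.
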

\begin{proof}
In Theorem \ref{prods} we have already seen that player 2 has a winning strategy in the proximal game with respect to any uniformity. To see that there is no winning strategy for player 2 in $\tpgame{A\times B}$, fix a strategy $\sigma$ for player 2. 

Consider the following open cover of $A\times B$
$$
\U_0=\{(0,\alpha]\times(\alpha,\omega_1):\alpha\in A\}\cup\{(\beta,\omega_1)\times(0,\beta]:\beta\in B\}.
$$

In inning $0$, let player 1 choose $\U_0$. Let $x_0=\pair{\alpha,\beta}$ be the choice of player 2 in inning $0$. Assume that $\beta<\alpha$. Then
$$
\Star{x_0,\U_0}=\bigcup\{(\gamma,\omega_1)\times(0,\gamma]:\gamma\in B,\beta\leq\gamma<\alpha+1\},
$$
which is bounded on the second coordinate by $\alpha+1$. Thus, starting from inning $1$ on, the game is played on the subspace $A\times (B\cap(\alpha+1))$. By Theorems \ref{semiproxtimesprox} and \ref{proxvstopprox} we know that player 2 has no winning strategy in $\tpgame{A\times (B\cap(\alpha+1))}$ so we may play in this way and defeat $\sigma$.

If $\alpha<\beta$, a similar argument can be given to defeat $\sigma$. This shows that $\sigma$ cannot be a winning strategy.
\end{proof}

\begin{ques}
If player 1 has a winning strategy in $\tpgame{X}$, does it follow that $X$ is proximal with respect to some (every) compatible uniformity on $X$?
\end{ques}

\begin{thm}
If $X$ is paracompact and player 1 has a winning strategy in $\tpgame{X}$, then $X$ is proximal with respect to some uniformity on $X$.
\end{thm}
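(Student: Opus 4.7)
The plan is to use the \emph{fine uniformity} $\unifU$ on $X$, whose entourage base consists of the sets $U_\U:=\bigcup\{W\times W:W\in\U\}$ as $\U$ ranges over all open covers of $X$, and to convert a winning strategy for player 1 in $\tpgame{X}$ into one for the proximal game on $\pair{X,\unifU}$. First I would verify that the family $\cov$ of all open covers of $X$ satisfies the hypotheses of Lemma \ref{covers}, so that it really does define a compatible uniformity. Condition (a) is immediate from the common refinement $\{U\cap V:U\in\U_0,\, V\in\U_1\}$. Condition (c) follows from $T_1$: for $x\neq y$ the cover $\{X\setminus\{x\},X\setminus\{y\}\}$ has no member containing both. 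Condition (b) is the substantive step, and it is exactly where paracompactness enters: it asserts that $X$ is fully normal, which for Tychonoff spaces is equivalent to paracompactness by A.\ H.\ Stone's theorem.

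Next, given a winning strategy $\sigma$ for player 1 in $\tpgame{X}$, I would simulate $\sigma$ inside the proximal game, exploiting the identity $U_\U[x]=\Star{x,\U}$. At inning $n$, with player 2's previous moves $x_0,\ldots,x_{n-1}$ recorded, I compute $\U_k=\sigma(x_0,\ldots,x_{k-1})$ for each $k\leq n$ (the covers $\sigma$ would prescribe in $\tpgame{X}$) and have player 1 respond in the proximal game with
$$
V_n=\bigcap_{k\leq n}U_{\U_k}.
$$
This is a finite intersection of basic entourages, hence an entourage of $\unifU$, and by construction $V_0\supseteq V_1\supseteq\cdots$ is decreasing, as required by the rules.

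Finally, I would check that this strategy wins. If player 2 plays $(x_n)_{n<\omega}$ with $x_{n+1}\in V_n[x_n]$, then since $V_n\subseteq U_{\U_n}$ we have $x_{n+1}\in\Star{x_n,\U_n}$, so $(x_n)$ is also a legal play against $\sigma$ in $\tpgame{X}$. Because $\sigma$ wins there, either $(x_n)$ converges, in which case player 1 already wins the proximal game, or $\bigcap_{n<\omega}\Star{x_n,\U_n}=\emptyset$, in which case the inclusion $V_n[x_n]\subseteq\Star{x_n,\U_n}$ forces $\bigcap_{n<\omega}V_n[x_n]=\emptyset$ and player 1 wins again. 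The main conceptual point is really the first step --- that paracompactness is exactly what turns open covers into a uniformity base, so that entourages and covers become interchangeable; once that is in hand, the translation of strategies is essentially bookkeeping.
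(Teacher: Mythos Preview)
Your proof is correct and follows exactly the line the paper takes: use paracompactness (via Stone's theorem) to verify the star-refinement hypothesis of Lemma~\ref{covers}, so that the family of all open covers generates a compatible uniformity, and then observe that the two games coincide for this uniformity. The paper compresses the translation of strategies into the single word ``clearly''; your explicit handling of the decreasing-entourage requirement by intersecting $U_{\U_0},\dots,U_{\U_n}$ is the natural way to unpack that.
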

\begin{proof}
Since $X$ is paracompact, every open cover has a star-refinement. By Lemma \ref{covers} we obtain that the set of all open covers of $X$ generates a uniformity on $X$. Clearly, $X$ is proximal with respect to this uniformity.
\end{proof}

\end{document}